\numberwithin{equation}{section}
\newtheorem{theo}{Theorem}[section]
\newtheorem{prop}[theo]{Proposition}
\newtheorem{lemma}[theo]{Lemma}
\newtheorem{defn}[theo]{Definition}
\newtheorem{assum}[theo]{Assumption}
\newtheorem{remark}[theo]{Remark}
\renewcommand{\labelenumi}{\alph{enumi}.)}
\newcommand{\cov}{\text{Cov} \mspace{1mu}}
\newcommand{\vertiii}[1]{{\left\vert\kern-0.25ex\left\vert\kern-0.25ex\left\vert #1 
 \right\vert\kern-0.25ex\right\vert\kern-0.25ex\right\vert}}
\begin{document}

\begin{frontmatter}

\title{Likelihood Ratio Gradient Estimation \\ for Steady-State Parameters}

\runtitle{Likelihood Ratio Gradient Estimation}

\author{\fnms{Peter W.} \snm{Glynn}\ead[label=e2]{glynn@stanford.edu}}
\address{Department of Management Science \\ and Engineering \\  Stanford University \\ Stanford, CA 94305 \\ \printead{e2}}
\affiliation{Stanford University}
\and \hspace{4pt}
\author{\fnms{Mariana} \snm{Olvera-Cravioto}\corref{}\ead[label=e1]{molvera@berkeley.edu}}
\address{Department of Industrial Engineering  \\ and Operations Research \\ University of California Berkeley \\ Berkeley, CA 94720  \\ \printead{e1}}
\affiliation{University of California Berkeley}

\runauthor{P. Glynn and M. Olvera-Cravioto}

\begin{abstract}
We consider a discrete-time Markov chain $\boldsymbol{\Phi}$ on a general state-space {\sf X}, whose transition probabilities are parameterized by a real-valued vector $\boldsymbol{\theta}$. Under the assumption that $\boldsymbol{\Phi}$ is geometrically ergodic with corresponding stationary distribution $\pi(\boldsymbol{\theta})$, we are interested in estimating the gradient $\nabla \alpha(\boldsymbol{\theta})$ of the steady-state expectation 
$$\alpha(\boldsymbol{\theta}) = \pi( \boldsymbol{\theta}) f.$$
 To this end, we first give sufficient conditions for the differentiability of $\alpha(\boldsymbol{\theta})$ and for the calculation of its gradient via a sequence of finite horizon expectations. We then propose two different likelihood ratio estimators and analyze their limiting behavior. 
\end{abstract}

\begin{keyword}[class=AMS]
\kwd[Primary ]{65C05, 60J22}
\kwd[; secondary ]{60J10, 60G42}
\end{keyword}

\begin{keyword}
\kwd{Simulation; gradient estimation; likelihood ratio}
\end{keyword}

\end{frontmatter}

\section{Introduction}

Consider a discrete-time Markov chain $\boldsymbol{\Phi} = \{ \Phi_k: k \geq 0\}$ on a general state space {\sf X}  whose transition kernel $P(\boldsymbol{\theta}) = \{ P(\boldsymbol{\theta}, x, A) : x \in {\sf X}, \, A \subseteq {\sf X} \}$ is parameterized by a vector $\boldsymbol{\theta} \in \boldsymbol{\Theta} \subseteq \mathbb{R}^d$ of continuous parameters. We assume that $\boldsymbol{\Phi}$ has a unique invariant distribution $\pi(\boldsymbol{\theta}) = \{ \pi(\boldsymbol{\theta}, A): A \subseteq {\sf X}\}$ and that we are interested in computing the gradient of 
$$\alpha(\boldsymbol{\theta}) = \pi(\boldsymbol{\theta}) f = \int_{\sf X} f(x) \pi(\boldsymbol{\theta}, dx),$$
at a specific point $\boldsymbol{\theta}_0 \in \boldsymbol{\Theta}$, for some function $f$ such that $\pi(\boldsymbol{\theta}) |f| < \infty$. We consider throughout the paper the geometrically ergodic case, 
where the conditions for the existence of the gradient $\nabla \alpha(\boldsymbol{\theta})$ are stated more concisely and are easier to verify. We then focus on the analysis of  two different likelihood ratio estimators, exhibiting desirable limiting behavior, that can be used to approximate the gradient.  We refer the reader to \cite{Rhee_Glynn_16} for a more thorough analysis of the existence of the gradient under more general conditions.

We now give an informal description of the type of estimators that we study; the necessary assumptions will be made precise in the following section. From the strong law of large numbers we can expect that when $\boldsymbol{\Phi}$ evolves according to $P(\boldsymbol{\theta})$, then
$$\alpha_n = \frac{1}{n} \sum_{j=0}^{n-1} f(\Phi_j) \to \alpha(\boldsymbol{\theta}) \qquad P(\boldsymbol{\theta})-a.s.$$
as $n \to \infty$ for any initial distribution.  Moreover, if we assume that there exists a family of densities $\{ p(\boldsymbol{\theta},x,y): x, y \in {\sf X}\}$ such that  the transition probabilities satisfy
$$P(\boldsymbol{\theta}, x, dy) = p(\boldsymbol{\theta}, x, y) P(\boldsymbol{\theta}_0, x, dy),$$
then we can construct the likelihood ratio
$$L_n(\boldsymbol{\theta}) = \prod_{j=1}^n  p(\boldsymbol{\theta}, \Phi_{j-1}, \Phi_j), \qquad n \geq 1,$$
and use it to compute the expectation of $\alpha_n$ via the identity
\begin{equation} \label{eq:LR}
E_{\boldsymbol{\theta}} [\alpha_n] = E_{\boldsymbol{\theta}_0} \left[ \alpha_n L_n(\boldsymbol{\theta}) \right];
\end{equation}
here $E_{\boldsymbol{\theta}}[ \, \cdot \, ]$ denotes the expectation with respect to the transition probabilities $P(\boldsymbol{\theta})$ when the chain is started according to some fixed distribution $\mu$. Details regarding this identity can be found for example in \cite{Glynn_LEcuyer_95}, Theorem 1. Next, provided we have uniform integrability, we would have that 
$$E_{\boldsymbol{\theta}} [\alpha_n] \to \alpha(\boldsymbol{\theta})$$
as $n \to \infty$, and if we can further justify the exchange of derivative and expectation, then
\begin{equation} \label{eq:ExchangeDeriv}
\nabla E_{\boldsymbol{\theta}}[ \alpha_n ] = E_{\boldsymbol{\theta}_0} \left[ \alpha_n \nabla L_n(\boldsymbol{\theta}) \right] \to \nabla \alpha(\boldsymbol{\theta}) \qquad n \to \infty.
\end{equation}
We point out that $n E_{\boldsymbol{\theta}_0} [\alpha_n]$ also represents the finite horizon total cost incurred by $\boldsymbol{\Phi}$, and therefore, the calculation of its gradient is interesting in its own right, i.e., not only for its relation to $\nabla \alpha(\boldsymbol{\theta})$. 
For details on the estimation of gradients via likelihood ratios and other methods, as well as a variety of applications in finance, operations research and engineering, we refer the reader to \cite{LEcuyer_90, Glasserman_91, Fu_06}.

The observation made above suggest that one could think of using $\alpha_n \nabla L_n(\boldsymbol{\theta})$ as an estimator for $\nabla \alpha(\boldsymbol{\theta})$. Unfortunately, $\alpha_n \nabla L_n(\boldsymbol{\theta})$ fails to converge as $n \to \infty$; in fact, under some additional assumptions, $n^{-1/2} \alpha_n \nabla L_n(\boldsymbol{\theta})$ converges in distribution to a multivariate normal random variable (see Proposition~\ref{P.CLTbadestimator}). The first of our two proposed estimators, described in detail in Section~\ref{S.FirstEstimator}, uses $\nabla L_n(\theta_0)$ as a control variate to reduce the variance of $\alpha_n \nabla L_n(\boldsymbol{\theta}_0)$. The resulting estimator, after choosing the optimal control variate coefficient, is given by
\begin{equation} \label{eq:CVestimator}
(\alpha_n - \alpha(\boldsymbol{\theta}_0)) \nabla L_n(\boldsymbol{\theta}_0),
\end{equation}
and is shown to converge in Proposition~\ref{P.CLTgoodestimator}. An estimator of this type has been shown in \cite{Hash_Nun_Plec_Vlac_15} to be very successful in practice, where it was used to compute the sensitivities in reaction networks.

Our second estimator, described in detail in Section~\ref{S.SecondEstimator}, exploits the martingale structure of $\nabla L_n(\boldsymbol{\theta}_0)$ to obtain 
an alternative representation for $E_{\boldsymbol{\theta}_0} [ \alpha_n \nabla L_n(\boldsymbol{\theta}_0)]$ as the expectation of the discrete stochastic integral
\begin{equation} \label{eq:IntegralEstimator1}
\frac{1}{n} \sum_{k=1}^{n-1} \sum_{l=k}^{n-1} f(\boldsymbol{\Phi}_l) D_k,
\end{equation}
where the $\{D_k\}$ are the martingale differences in $\mathbb{R}^d$ satisfying $\nabla L_n(\boldsymbol{\theta}_0) = \sum_{k=1}^n D_k$. As is the case with $\alpha_n \nabla L_n(\boldsymbol{\theta})$, this estimator fails to converge on its own (see Proposition~\ref{P.CLTbadestimator2}), but can dramatically be improved by centering it with respect to $\alpha(\boldsymbol{\theta}_0)$. The optimized estimator takes the form 
\begin{equation} \label{eq:IntegralEstimator2}
\frac{1}{n} \sum_{k=1}^{n-1} \sum_{l=k}^{n-1} \left( f(\boldsymbol{\Phi}_l) -\alpha(\boldsymbol{\theta}_0) \right) D_k. 
\end{equation}
Moreover, the analysis of the asymptotic variance of \eqref{eq:CVestimator} and \eqref{eq:IntegralEstimator2}, included in Section~\ref{S.Remarks}, shows that \eqref{eq:IntegralEstimator2} is a better estimator than \eqref{eq:CVestimator}.

The first part of the paper establishes sufficient conditions on the Markov chain $\boldsymbol{\Phi}$ and the function $f$ under which $\nabla \alpha(\boldsymbol{\theta}_0)$ exists and the following limit holds
\begin{equation} \label{eq:ConvergenceToGradient}
E_{\boldsymbol{\theta}_0} \left[ \alpha_n \nabla L_n(\boldsymbol{\theta}_0) \right] \to \nabla \alpha(\boldsymbol{\theta}_0) \qquad n \to \infty.
\end{equation}
Conditions under which the exchange of derivative and expectation in \eqref{eq:ExchangeDeriv} is valid can be found in 
\cite{LEcuyer_95}, so we will not focus on this point. Once the convergence in \eqref{eq:ConvergenceToGradient} is established in Section \ref{S.Model}, we move on to the analysis of $\alpha_n \nabla L_n(\boldsymbol{\theta}_0)$ and the control variates estimator given in \eqref{eq:CVestimator}; the corresponding limit theorems are stated in Section~\ref{S.FirstEstimator}. The limit theorems for the the integral-type estimators given in \eqref{eq:IntegralEstimator1} and \eqref{eq:IntegralEstimator2} are included in Section~\ref{S.SecondEstimator}. To conclude the expository part of the paper, we compute in Section~\ref{S.Remarks} the asymptotic variance of our two proposed estimators. Finally, Section \ref{S.Proofs} contains the majority of the proofs.

\section{The model} \label{S.Model}

We consider throughout the paper a discrete-time Markov chain $\boldsymbol{\Phi} = \{ \Phi_k: k \geq 0\}$ on a general state space {\sf X} equipped with a countably generated $\sigma$-field $\mathcal{B}({\sf X})$, and governed by the transition kernel $P(\boldsymbol{\theta}) = \{ P(\boldsymbol{\theta}, x, A) : x \in {\sf X}, A \subseteq {\sf X} \}$. We assume that $\boldsymbol{\Theta} \subseteq \mathbb{R}^d$ is a family of continuous parameters for $P(\boldsymbol{\theta})$. Under the conditions given below, the Markov chain will possess a unique stationary distribution $\pi(\boldsymbol{\theta}) = \{ \pi(\boldsymbol{\theta}, A): A \subseteq {\sf X} \}$, and we are  interested in estimating the gradient of 
$$\alpha(\boldsymbol{\theta}) = \pi(\boldsymbol{\theta})f$$
at some fixed point $\boldsymbol{\theta}_0 \in \boldsymbol{\Theta}$, for some function $f: {\sf X} \to \mathbb{R}$ such that $\pi(\boldsymbol{\theta})|f| < \infty$.

In terms of notation, we use $\nu f$ to denote the expectation of $f$ with respect to measure $\nu$, that is,
$$\nu f = \int_{\sf X} f(x) \nu(dx).$$
Similarly, for any Markov transition kernel $P$ we use
$$P f(x) = \int_{\sf X} f(y) P(x, dy).$$
Whenever the context is clear we denote the gradient of a function $g$ at the point $\boldsymbol{\theta}_0$ by $\nabla g(\boldsymbol{\theta}_0)$, and when confusion may arise we will use the more precise notation $\left. \nabla g(\boldsymbol{\theta}) \right|_{\boldsymbol{\theta} = \boldsymbol{\theta}_0}$. The convention is to think of vectors as column vectors and to use ${\bf x}'$ to denote the transpose of ${\bf x}$. 

Before giving the main set of assumptions for the Markov chain $\boldsymbol{\Phi}$ we include for completeness some basic norm definitions. 

\begin{defn}
For $h: {\sf X} \to [1, \infty)$ let $L_h^\infty$ denote the space of all measurable functions $h$ on {\sf X} such that $|g(x)|/h(x)$ is bounded in $x$, equipped with the norm
$$|g|_h = \sup_{x \in {\sf X}} \frac{|g(x)|}{h(x)}.$$
\end{defn}

\begin{defn}
For $h: {\sf X} \to [1, \infty)$ define the $h$-{\em total variation norm} of any signed measure $\nu$ as
$$|| \nu ||_h = \sup_{g: |g| \leq h } \left| \nu g \right|.$$
\end{defn}

\begin{defn}
For a positive function $V: {\sf X} \to [1,\infty)$ we define the $V$-{\em operator norm} distance between two Markov transition kernels $P_1$ and $P_2$ as
$$\vertiii{P_1 - P_2}_V  = \sup_{h \in L_V^\infty, |h|_V = 1} \left| (P_1 - P_2) h \right|_V.$$
\end{defn}

{\bf Note:} It can be shown that the $h$-operator norm distance can be written in terms of the $h$-total variation norm as
$$\vertiii{P_1 - P_2}_V = \sup_{x \in {\sf X}} \frac{|| P_1(x, \cdot) - P_2(x, \cdot) ||_V}{V(x)}.$$ 

We can now state a set of sufficient conditions that will guarantee that $\nabla \alpha(\boldsymbol{\theta}_0)$ exists and that \eqref{eq:ConvergenceToGradient} holds.

\begin{assum} \label{A.Vuniformity}
Let $\boldsymbol{\Phi} = \{ \Phi_n: n \geq 0\}$ be a Markov chain taking values on ${\sf X}$ and having one-step transition probabilities $P(\boldsymbol{\theta}) = \{ P(\boldsymbol{\theta}, x, dy): x, y \in {\sf X} \}$, where $\boldsymbol{\theta} \in \boldsymbol{\Theta} \subseteq \mathbb{R}^d$. Fix $\epsilon > 0$ and define $B_\epsilon(\boldsymbol{\theta}_0) = \{ \boldsymbol{\theta} \in \boldsymbol{\Theta}: \max_{1\leq i\leq d} | \theta_i - \theta_{0,i} | < \epsilon \}$. 
\begin{enumerate} \renewcommand{\labelenumi}{\roman{enumi})}
\item Suppose that $\boldsymbol{\Phi}$ is $\psi$-irreducible for all $\boldsymbol{\theta} \in B_\epsilon(\boldsymbol{\theta}_0)$.

\item Suppose that for all $\boldsymbol{\theta} \in B_\epsilon(\boldsymbol{\theta}_0)$ there exist densities $p(\boldsymbol{\theta},x,y)$, differentiable at $\boldsymbol{\theta}_0$ and such that
$$P(\boldsymbol{\theta},x,dy) = p(\boldsymbol{\theta}, x, y) P(\boldsymbol{\theta}_0, x,dy).$$

\item Suppose there exists a set $K \subseteq \mathcal{B}({\sf X})$, $\delta > 0$, $m \in \mathbb{N}$ and a probability measure $\nu$ such that
$$P^m(\boldsymbol{\theta}, x, dy) \geq \delta \nu(dy) \qquad \text{for all } x \in K,$$
for all $\boldsymbol{\theta} \in B_\epsilon(\boldsymbol{\theta}_0)$.

\item For the set $K$ above suppose there exists a function $V: {\sf X} \to [1, \infty)$ and constants $0 < \lambda < 1$, $b < \infty$, such that
\begin{equation} \label{eq:drift}
P(\boldsymbol{\theta}) V (x) \leq \lambda V(x) + b 1_K(x)
\end{equation}
for all $\boldsymbol{\theta} \in B_\epsilon(\boldsymbol{\theta}_0)$.

\item Let $P^{(i)}(\boldsymbol{\theta}_0, x, dy ) = \left. \frac{\partial}{\partial \theta_i} p(\boldsymbol{\theta}, x, y) \right|_{\boldsymbol{\theta} = \boldsymbol{\theta}_0} P(\boldsymbol{\theta}_0, x, dy)$ and ${\bf e}_i$ be the vector that has a 1 in the $i$th component and zeros elsewhere. Assume that for each $1 \leq i \leq d$ we have $\vertiii{ P^{(i)}(\boldsymbol{\theta}_0)  }_V < \infty$ and 
$$\lim_{h \to 0} \vertiii{ \frac{P(\boldsymbol{\theta}_0 + h {\bf e}_i) - P(\boldsymbol{\theta}_0)}{h} - P^{(i)}(\boldsymbol{\theta}_0) }_V = 0.$$

\item Suppose  that $|g_{ii}|_V < \infty$ for each $1 \leq i \leq d$, where
$$g_{ii}(x) = \int_{{\sf X}} \left( \frac{\partial}{\partial \theta_i} p(\boldsymbol{\theta}_0, x,y)  \right)^2 P(\boldsymbol{\theta}_0, x, dy).$$


\item Suppose $|f|_{\sqrt{V}} < \infty$. 

\end{enumerate}
\end{assum}

\begin{remark}
\begin{itemize}
\item[i)] By iterating \eqref{eq:drift} we obtain
$$E_{\boldsymbol{\theta}, x} [ V(\Phi_k) ] = P^k(\boldsymbol{\theta}) V(x) \leq \lambda^k V(x) + b \sum_{i=0}^{k-1} \lambda^i < \infty$$
for all $x \in {\sf X}$ and all $k \in \mathbb{N}$. 
\item[ii)] A set $K$ satisfying Assumption~\ref{A.Vuniformity}(iii) is said to be a {\em small set}. 
\end{itemize}
\end{remark}

The conditions in Assumption \ref{A.Vuniformity}, which essentially impose geometric ergodicity (see e.g., \cite{Meyn_Tweedie}) of the chain $\boldsymbol{\Phi}$, are not necessary for the main convergence result of this section (Theorem \ref{T.Convergence}), but have the advantage of allowing us to keep the arguments concise and focus on the estimators in the following sections. A similar set of conditions has been used in \cite{Hei_Hor_Wei_06} (see Section~4.1). More general conditions ensuring the existence of the gradient outside of the geometric ergodicity setting can be found in \cite{Hei_Hor_09}, and more recently, in \cite{Rhee_Glynn_16}.

We will now proceed to give some properties of $\boldsymbol{\Phi}$, for which we will need the following definition. Proofs not included immediately after the corresponding statement can be found in Section~\ref{S.Proofs}.

\begin{defn}
We say that the Markov chain $\boldsymbol{\Phi}$ is $h$-{\em ergodic} if $h: {\sf X} \to [1, \infty)$ and
\begin{enumerate} \renewcommand{\labelenumi}{\roman{enumi})}
\item $\{ \Phi_k: k \geq 0 \}$ is positive Harris recurrent with invariant probability $\pi$.
\item the expectation $\pi h$ is finite
\item for every initial condition $x \in {\sf X}$,
$$\lim_{k \to \infty} || P^k(x, \cdot) - \pi ||_h = 0.$$
\end{enumerate}
\end{defn}

\begin{lemma} \label{L.ChainProperties}
Under Assumption \ref{A.Vuniformity}, the Markov chain $\boldsymbol{\Phi} = \{ \Phi_k: k \geq 0 \}$ is $V$-ergodic for each $\boldsymbol{\theta} \in B_\epsilon(\boldsymbol{\theta}_0)$. Furthermore, for all $1 \leq i \leq d$,
$$\lim_{h \to 0} \left|\left| \pi(\boldsymbol{\theta}_0 + h {\bf e}_i) - \pi(\boldsymbol{\theta}_0) \right|\right|_V = 0.$$
\end{lemma}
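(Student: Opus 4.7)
The plan is to split the statement into two parts and handle them in order.

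For the $V$-ergodicity claim I would simply invoke classical Meyn--Tweedie theory: for fixed $\boldsymbol{\theta} \in B_\epsilon(\boldsymbol{\theta}_0)$, the combination of $\psi$-irreducibility (Assumption~\ref{A.Vuniformity}(i)), the $m$-step minorization on the small set $K$ (Assumption~\ref{A.Vuniformity}(iii)), and the geometric drift condition (Assumption~\ref{A.Vuniformity}(iv)) is exactly the standard hypothesis list for $V$-geometric ergodicity, yielding positive Harris recurrence with invariant law $\pi(\boldsymbol{\theta})$ and $\|P^k(\boldsymbol{\theta},x,\cdot) - \pi(\boldsymbol{\theta})\|_V \to 0$ for every $x$. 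Integrating the drift inequality against $\pi(\boldsymbol{\theta})$ gives $\pi(\boldsymbol{\theta}) V \leq b/(1-\lambda)$, uniformly for $\boldsymbol{\theta} \in B_\epsilon(\boldsymbol{\theta}_0)$, which both establishes $\pi(\boldsymbol{\theta}) V < \infty$ and supplies a key ingredient for the continuity argument below.

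For the $V$-norm continuity of $\pi$, the starting point is the decomposition, valid for any $n \geq 1$,
\[
\pi(\boldsymbol{\theta}) - \pi(\boldsymbol{\theta}_0) = \pi(\boldsymbol{\theta})\bigl[P^n(\boldsymbol{\theta}) - P^n(\boldsymbol{\theta}_0)\bigr] + \bigl[\pi(\boldsymbol{\theta}) - \pi(\boldsymbol{\theta}_0)\bigr] P^n(\boldsymbol{\theta}_0),
\]
obtained from $\pi(\boldsymbol{\theta}) P^n(\boldsymbol{\theta}) = \pi(\boldsymbol{\theta})$ and $\pi(\boldsymbol{\theta}_0) P^n(\boldsymbol{\theta}_0) = \pi(\boldsymbol{\theta}_0)$. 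Testing against $g$ with $|g|_V \leq 1$, the first summand is bounded by $\vertiii{P^n(\boldsymbol{\theta}) - P^n(\boldsymbol{\theta}_0)}_V \cdot \pi(\boldsymbol{\theta}) V$. For the second, I would exploit that $\pi(\boldsymbol{\theta}) - \pi(\boldsymbol{\theta}_0)$ annihilates constants to rewrite it as $\bigl[\pi(\boldsymbol{\theta}) - \pi(\boldsymbol{\theta}_0)\bigr]\bigl(P^n(\boldsymbol{\theta}_0) g - \pi(\boldsymbol{\theta}_0) g\bigr)$; by the first part applied at $\boldsymbol{\theta}_0$ (used in its uniform form, which the Meyn--Tweedie proof delivers), the function $P^n(\boldsymbol{\theta}_0) g - \pi(\boldsymbol{\theta}_0) g$ has $V$-norm at most some $\epsilon_n$ that is independent of $g$ and tends to $0$.

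The argument is then closed by an $\eta/2$ trick. Given $\eta > 0$, first pick $n$ so large that $\epsilon_n \cdot (\pi(\boldsymbol{\theta}) V + \pi(\boldsymbol{\theta}_0) V) < \eta/2$ uniformly in $\boldsymbol{\theta}\in B_\epsilon(\boldsymbol{\theta}_0)$; then let $\boldsymbol{\theta} \to \boldsymbol{\theta}_0$ and use the telescoping identity
\[
P^n(\boldsymbol{\theta}) - P^n(\boldsymbol{\theta}_0) = \sum_{k=0}^{n-1} P^k(\boldsymbol{\theta})\bigl[P(\boldsymbol{\theta}) - P(\boldsymbol{\theta}_0)\bigr] P^{n-1-k}(\boldsymbol{\theta}_0),
\]
submultiplicativity of the $V$-operator norm, and the uniform bound $\vertiii{P^j(\boldsymbol{\theta})}_V \leq 1 + b/(1-\lambda)$ (obtained by iterating the drift) to conclude that $\vertiii{P^n(\boldsymbol{\theta}) - P^n(\boldsymbol{\theta}_0)}_V \leq C_n \vertiii{P(\boldsymbol{\theta}) - P(\boldsymbol{\theta}_0)}_V$. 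By Assumption~\ref{A.Vuniformity}(v), differentiability of $P(\boldsymbol{\theta})$ at $\boldsymbol{\theta}_0$ in $V$-operator norm implies continuity there, so the right side can be made smaller than $\eta/(2 C_n\cdot b/(1-\lambda))$, and the total bound is below $\eta$.

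The main obstacle is the delicate interplay of the two limits: one cannot send $\boldsymbol{\theta}\to\boldsymbol{\theta}_0$ and $n\to\infty$ independently, and the $\eta/2$ strategy only works because Assumption~\ref{A.Vuniformity}(iv) delivers both the uniform bound $\sup_{\boldsymbol{\theta}\in B_\epsilon(\boldsymbol{\theta}_0)} \pi(\boldsymbol{\theta}) V < \infty$ and the uniform single-step operator-norm bounds needed to control the telescoping sum for each fixed $n$; verifying these uniform estimates is where the bulk of the care is required.
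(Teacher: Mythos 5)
Your argument is correct, but it takes a more self-contained route than the paper, which is worth comparing. For the $V$-ergodicity claim you and the paper reach the same conclusion through slightly different citations: the paper rewrites the drift inequality for $\hat V=(1-\lambda)^{-1}V$ and invokes Theorems 14.2.6 and 14.3.3 of Meyn--Tweedie ($V$-regularity, hence $V$-ergodicity), whereas you appeal directly to the irreducibility--minorization--drift criterion for $V$-geometric ergodicity; both are standard and give the same result here (and both share the same implicit reliance on aperiodicity, so that is not a gap attributable to you). The genuine difference is in the continuity of $\pi(\cdot)$ in $V$-norm: the paper disposes of it in one line by citing Section 4.2 of Glynn--Meyn (1996), while you reprove that perturbation result from scratch via the decomposition $\pi(\boldsymbol{\theta})-\pi(\boldsymbol{\theta}_0)=\pi(\boldsymbol{\theta})\bigl[P^n(\boldsymbol{\theta})-P^n(\boldsymbol{\theta}_0)\bigr]+\bigl[\pi(\boldsymbol{\theta})-\pi(\boldsymbol{\theta}_0)\bigr]P^n(\boldsymbol{\theta}_0)$, the telescoping estimate $\vertiii{P^n(\boldsymbol{\theta})-P^n(\boldsymbol{\theta}_0)}_V\le C_n\vertiii{P(\boldsymbol{\theta})-P(\boldsymbol{\theta}_0)}_V$, and the uniform bounds $\pi(\boldsymbol{\theta})V\le b/(1-\lambda)$ and $\vertiii{P^j(\boldsymbol{\theta})}_V\le 1+b/(1-\lambda)$ extracted from the drift condition, with Assumption~\ref{A.Vuniformity}(v) supplying $\vertiii{P(\boldsymbol{\theta}_0+h{\bf e}_i)-P(\boldsymbol{\theta}_0)}_V\to 0$. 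One point to make explicit: your bound on the second summand needs $V$-\emph{uniform} ergodicity at $\boldsymbol{\theta}_0$, i.e.\ an $\epsilon_n\to 0$ uniform over $|g|_V\le 1$, which is stronger than the pointwise $V$-ergodicity asserted in the first claim of the lemma; it does follow from the same drift and minorization hypotheses (Meyn--Tweedie, Theorem 16.0.1), as you indicate, but it should be cited as such rather than as a consequence of the first part. What your route buys is independence from the Glynn--Meyn perturbation reference and explicit uniform constants over $B_\epsilon(\boldsymbol{\theta}_0)$; what the paper's route buys is brevity.
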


\begin{proof}
Fix $\boldsymbol{\theta} \in B_\epsilon(\boldsymbol{\theta}_0)$. By Assumption \ref{A.Vuniformity}(iv)
$$P(\boldsymbol{\theta}) \hat V(x) \leq \hat V(x) - V(x) + \hat b 1_K(x),$$
where $\hat V = (1-\lambda)^{-1} V$, $\hat b = (1-\lambda)^{-1} b$ and $K$ is a small set. Then, by Theorem 14.2.6 in \cite{Meyn_Tweedie} $\boldsymbol{\Phi}$ is $V$-regular, which in turn implies, by Theorem 14.3.3 in the same reference, that $\boldsymbol{\Phi}$ is $V$-ergodic. To establish the convergence in $V$-norm of the invariant probabilities first note that Assumption~\ref{A.Vuniformity}(v) yields
$$\lim_{h \to 0} \vertiii{ P(\boldsymbol{\theta}_0 + h {\bf e}_i) - P(\boldsymbol{\theta}_0) }_V = 0,$$
from where it follows that $\left|\left| \pi(\boldsymbol{\theta}_0 + h {\bf e}_i) - \pi(\boldsymbol{\theta}_0) \right|\right|_V \to 0$ as $h \to 0$ (see Section 4.2 in \cite{Glynn_Meyn_96}). 
\end{proof}

\bigskip

The main idea behind the analysis of the gradient of the likelihood ratio $L_n(\boldsymbol{\theta})$ is that under appropriate conditions each of its components is a square integrable martingale with respect to the family of filtrations generated by $\boldsymbol{\Phi}$. The next lemma makes this statement precise; its proof can be found in Section~\ref{S.Proofs}.

\bigskip

\begin{lemma} \label{L.MG_cond}
Suppose that Assumption \ref{A.Vuniformity} is satisfied. Define
\begin{align*}
D_j^k &= \left. \frac{\partial}{\partial \theta_k} \log p(\boldsymbol{\theta}, \Phi_{j-1}, \Phi_j) \right|_{\boldsymbol{\theta} = \boldsymbol{\theta}_0} = \frac{\partial}{\partial \theta_k} p(\boldsymbol{\theta}_0, \Phi_{j-1}, \Phi_j) , \qquad j = 1, 2, \dots, 
\end{align*}
and $D_j = (D_j^1, \dots, D_j^d)'$; let $\mathcal{F}_j$ denote the $\sigma$-field generated by $\Phi_0, \Phi_1, \dots, \Phi_j$. Then,  under $P(\boldsymbol{\theta}_0)$,
$$\nabla L_n(\boldsymbol{\theta}_0) = \sum_{j=1}^n D_j$$
is a square-integrable martingale in $\mathbb{R}^d$, that is, $M_n^k = \sum_{j=1}^n D_j^k$ $(M_0^k \equiv 0)$ is a square integrable-martingale adapted to $\mathcal{F}_k$ for each $k = 1, \dots, d$. 
\end{lemma}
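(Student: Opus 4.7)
The plan is to verify in order: the identity $\nabla L_n(\boldsymbol{\theta}_0)=\sum_{j=1}^n D_j$, adaptedness, the martingale (mean-zero increment) property, and square-integrability.

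First I would note that Assumption \ref{A.Vuniformity}(ii) forces $p(\boldsymbol{\theta}_0,x,y)=1$ almost everywhere (as a Radon--Nikodym derivative of $P(\boldsymbol{\theta}_0,x,\cdot)$ with respect to itself), so that $L_n(\boldsymbol{\theta}_0)=1$. Differentiating $L_n(\boldsymbol{\theta})=\prod_{j=1}^n p(\boldsymbol{\theta},\Phi_{j-1},\Phi_j)$ at $\boldsymbol{\theta}_0$ and evaluating using $p(\boldsymbol{\theta}_0,\Phi_{j-1},\Phi_j)=1$ gives
$$\left.\frac{\partial}{\partial\theta_k}L_n(\boldsymbol{\theta})\right|_{\boldsymbol{\theta}=\boldsymbol{\theta}_0}=\sum_{j=1}^n \left.\frac{\partial}{\partial\theta_k} p(\boldsymbol{\theta},\Phi_{j-1},\Phi_j)\right|_{\boldsymbol{\theta}=\boldsymbol{\theta}_0}=\sum_{j=1}^n D_j^k,$$
which is the desired identity. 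Adaptedness of $M_n^k=\sum_{j=1}^n D_j^k$ to $\mathcal{F}_n$ is immediate since $D_j^k$ is a measurable function of $(\Phi_{j-1},\Phi_j)$.

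The key step is the martingale property $E_{\boldsymbol{\theta}_0}[D_{j+1}^k\mid\mathcal{F}_j]=0$. By the Markov property this equals
$$\int_{\sf X}\left.\frac{\partial}{\partial\theta_k}p(\boldsymbol{\theta},\Phi_j,y)\right|_{\boldsymbol{\theta}=\boldsymbol{\theta}_0} P(\boldsymbol{\theta}_0,\Phi_j,dy),$$
so it suffices to show $\int \partial_{\theta_k} p(\boldsymbol{\theta}_0,x,y)\, P(\boldsymbol{\theta}_0,x,dy)=0$ for every $x$. This is the standard score-identity obtained by differentiating $\int p(\boldsymbol{\theta},x,y)P(\boldsymbol{\theta}_0,x,dy)=1$ (normalization of $P(\boldsymbol{\theta},x,\cdot)$) at $\boldsymbol{\theta}_0$, but one has to legitimately exchange differentiation and integration. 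This is where I expect the main (and really only) technical wrinkle to live, and Assumption \ref{A.Vuniformity}(v) is tailored to handle it: applying the operator-norm convergence with $h=\mathbf{1}$ (which satisfies $|\mathbf{1}|_V=1$ since $V\ge 1$) yields
$$\left|\,\int \frac{p(\boldsymbol{\theta}_0+h\mathbf{e}_k,x,y)-1}{h}\,P(\boldsymbol{\theta}_0,x,dy)-\int \partial_{\theta_k}p(\boldsymbol{\theta}_0,x,y)\,P(\boldsymbol{\theta}_0,x,dy)\right|\le V(x)\,\vertiii{\tfrac{P(\boldsymbol{\theta}_0+h\mathbf{e}_k)-P(\boldsymbol{\theta}_0)}{h}-P^{(k)}(\boldsymbol{\theta}_0)}_V,$$
which vanishes as $h\to 0$. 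The left-hand integrand's average is identically $0$ (both $P(\boldsymbol{\theta}_0+h\mathbf{e}_k,x,\cdot)$ and $P(\boldsymbol{\theta}_0,x,\cdot)$ are probability measures), so the limit gives the score identity.

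Finally, for square-integrability, condition on $\mathcal{F}_{j-1}$ to compute
$$E_{\boldsymbol{\theta}_0}\!\left[(D_j^k)^2\mid\mathcal{F}_{j-1}\right]=\int\left(\partial_{\theta_k}p(\boldsymbol{\theta}_0,\Phi_{j-1},y)\right)^2 P(\boldsymbol{\theta}_0,\Phi_{j-1},dy)=g_{kk}(\Phi_{j-1}).$$
By Assumption \ref{A.Vuniformity}(vi), $g_{kk}\le |g_{kk}|_V\cdot V$, so $E_{\boldsymbol{\theta}_0}[(D_j^k)^2]\le |g_{kk}|_V\,E_{\boldsymbol{\theta}_0}[V(\Phi_{j-1})]$, and the drift inequality iterated in Remark (i) bounds $E_{\boldsymbol{\theta}_0}[V(\Phi_{j-1})]$ uniformly over any bounded range of $j$. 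Since martingale differences are orthogonal, $E_{\boldsymbol{\theta}_0}[(M_n^k)^2]=\sum_{j=1}^n E_{\boldsymbol{\theta}_0}[(D_j^k)^2]<\infty$, completing the proof.
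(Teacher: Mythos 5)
Your proof is correct and follows essentially the same route as the paper: the product-rule/$L_n(\boldsymbol{\theta}_0)\equiv 1$ identity, the score identity $\int \partial_{\theta_k}p(\boldsymbol{\theta}_0,x,y)\,P(\boldsymbol{\theta}_0,x,dy)=0$ justified by the $V$-operator-norm convergence in Assumption (v) (the paper phrases this as an exchange of derivative and integral, you phrase it via the vanishing difference quotient of probability masses, which is the same estimate), and square-integrability via $g_{kk}\le |g_{kk}|_V V$ together with the iterated drift bound. The only cosmetic blemish is the notational clash of using $h$ both for the scalar increment and for the test function $\mathbf{1}$ (for which $|\mathbf{1}|_V\le 1$ rather than $=1$), neither of which affects the argument.
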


\bigskip

The analysis of $\alpha_n \nabla L_n(\boldsymbol{\theta}_0)$ and of its expectation is based on a second martingale, one constructed via a solution $\hat f$ to Poisson's equation:
\begin{equation} \label{eq:PoissonEq}
\hat f - P(\boldsymbol{\theta}_0) \hat f = f - \pi(\boldsymbol{\theta}_0) f.
\end{equation}
Note that if this solution exists then the centered estimator $\alpha_n - \alpha(\boldsymbol{\theta}_0)$ can be written as follows:
\begin{align}
n(\alpha_n - \alpha(\boldsymbol{\theta}_0)) &= \sum_{k=1}^{n} \left( f(\Phi_{k-1}) - \pi(\boldsymbol{\theta}_0)f \right) \notag \\
&= \sum_{k=1}^{n} \left( \hat f(\Phi_{k-1}) - P(\boldsymbol{\theta}_0) \hat f(\Phi_{k-1}) \right) \notag \\
&= \hat f(\Phi_0) - \hat f(\Phi_n) + \sum_{k=1}^{n} \left( \hat f(\Phi_{k}) - P(\boldsymbol{\theta}_0) \hat f(\Phi_{k-1}) \right) , \label{eq:alphas}
\end{align}
where the terms $\hat f(\Phi_{k}) - P(\boldsymbol{\theta}_0) \hat f(\Phi_{k-1})$ can be shown to be martingale differences. It follows that provided $E_{\boldsymbol{\theta}_0} [ | \hat f(\Phi_0) - \hat f(\Phi_n) |]/n \to 0$ as $n \to \infty$, we have that $E_{\boldsymbol{\theta}_0} [ \alpha_n \nabla L_n(\boldsymbol{\theta}_0)] = E_{\boldsymbol{\theta}_0} [ (\alpha_n -  \alpha(\boldsymbol{\theta}_0)) \nabla L_n(\boldsymbol{\theta}_0)]$ is the expectation of a product of two martingales. The lemma below gives precise properties of this second martingale. 

\begin{lemma} \label{L.Poisson}
Suppose that Assumption \ref{A.Vuniformity} is satisfied, then $\pi(\boldsymbol{\theta}_0)V < \infty$, $\pi(\boldsymbol{\theta}_0)f^2 < \infty$ and a solution $\hat f$ to Poisson's equation \eqref{eq:PoissonEq} satisfying $|\hat f| \leq c_1 \sqrt{V}$ for some constant $c_1 < \infty$ exists. Moreover,   under $P(\boldsymbol{\theta}_0)$, $Z_n = \sum_{k=1}^n \left( \hat f(\Phi_k) - P(\boldsymbol{\theta}_0) \hat f(\Phi_{k-1}) \right)$ is a square-integrable martingale adapted to $\mathcal{F}_k = \sigma(\Phi_0, \Phi_1, \dots, \Phi_k)$. 
\end{lemma}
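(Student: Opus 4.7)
The plan is to prove the three integrability/existence claims first, and then verify the martingale structure from the Markov property together with the bound on $\hat f$. For $\pi(\boldsymbol{\theta}_0)V<\infty$, I would invoke $V$-regularity (already noted in the proof of Lemma~\ref{L.ChainProperties}) together with the fact that under Assumption~\ref{A.Vuniformity}(iv) the drift inequality $P(\boldsymbol{\theta}_0)V\le \lambda V+b1_K$ yields, after integrating against $\pi(\boldsymbol{\theta}_0)$ and using stationarity, the bound $\pi(\boldsymbol{\theta}_0)V\le b/(1-\lambda)$. For $\pi(\boldsymbol{\theta}_0)f^2<\infty$, Assumption~\ref{A.Vuniformity}(vii) gives $|f(x)|\le |f|_{\sqrt V}\sqrt{V(x)}$, so $f^2\le |f|_{\sqrt V}^2 V$ and the previous bound applies.

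The substantive step is the existence of a solution $\hat f$ with $|\hat f|\le c_1\sqrt V$. I would reduce the $V$-drift condition to a $\sqrt V$-drift condition by Jensen's inequality: since $x\mapsto\sqrt x$ is concave,
\begin{equation*}
P(\boldsymbol{\theta}_0)\sqrt V(x)\le \sqrt{P(\boldsymbol{\theta}_0)V(x)}\le\sqrt{\lambda V(x)+b1_K(x)}\le\sqrt\lambda\,\sqrt{V(x)}+\sqrt b\,1_K(x),
\end{equation*}
so $W:=\sqrt V$ satisfies a geometric drift condition towards the same small set $K$ with rate $\sqrt\lambda<1$. Combined with the minorization in Assumption~\ref{A.Vuniformity}(iii) and the $\psi$-irreducibility of Assumption~\ref{A.Vuniformity}(i), Theorems~14.2.6, 14.3.3 and 17.4.2 of \cite{Meyn_Tweedie} then furnish a solution $\hat f$ to \eqref{eq:PoissonEq} satisfying $|\hat f|\le c_1\sqrt V$ for some $c_1<\infty$, as soon as $|f|_{\sqrt V}<\infty$.

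For the martingale statement, set $\Delta_k=\hat f(\Phi_k)-P(\boldsymbol{\theta}_0)\hat f(\Phi_{k-1})$. The Markov property gives $E_{\boldsymbol{\theta}_0}[\hat f(\Phi_k)\mid\mathcal F_{k-1}]=P(\boldsymbol{\theta}_0)\hat f(\Phi_{k-1})$, hence $E_{\boldsymbol{\theta}_0}[\Delta_k\mid\mathcal F_{k-1}]=0$ and $Z_n=\sum_{k=1}^n\Delta_k$ is $\mathcal F_k$-adapted with zero conditional mean. Square-integrability follows from the bound on $\hat f$: using Jensen's inequality a second time,
\begin{equation*}
E_{\boldsymbol{\theta}_0}[\Delta_k^2]\le 2E_{\boldsymbol{\theta}_0}[\hat f(\Phi_k)^2]+2E_{\boldsymbol{\theta}_0}[(P(\boldsymbol{\theta}_0)\hat f(\Phi_{k-1}))^2]\le 4c_1^2\,E_{\boldsymbol{\theta}_0}[V(\Phi_k)],
\end{equation*}
and the iterated drift relation of Remark~(i) following Assumption~\ref{A.Vuniformity} gives $E_{\boldsymbol{\theta}_0}[V(\Phi_k)]\le \mu V+b/(1-\lambda)<\infty$ provided $\mu V<\infty$ (which is the standing initial-distribution assumption).

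The main obstacle is the middle paragraph: one must verify that the chain's ergodic machinery upgrades the $V$-drift to a usable $\sqrt V$-drift, so that the Poisson equation admits a solution controlled by $\sqrt V$ rather than by $V$ itself. The Jensen trick is the key: once $\sqrt V$ is shown to be a Lyapunov function with the same small set, the existence and bound on $\hat f$ are standard, and the remaining martingale and integrability arguments are direct consequences of the Markov property and the iterated drift inequality.
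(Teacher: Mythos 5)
Your proposal is correct and follows essentially the same route as the paper: the Jensen step $P(\boldsymbol{\theta}_0)\sqrt{V}\le\sqrt{\lambda}\,\sqrt{V}+\sqrt{b}\,1_K$ to turn the $V$-drift into a $\sqrt{V}$-drift, then the standard Poisson-equation existence theorem (Theorem 17.4.2 in \cite{Meyn_Tweedie}, equivalently Theorem 2.3 in \cite{Glynn_Meyn_96}) with $|f|\le|f|_{\sqrt{V}}\sqrt{V}$, and finally the Markov property plus the iterated drift bound for the martingale and its square-integrability. The only cosmetic differences are that the paper rescales $\sqrt{V}$ into an explicit (V3) Lyapunov function $\tilde V$ before citing the theorem, and bounds $E_{\boldsymbol{\theta}_0}[\Delta_k^2]$ via the Poisson equation rather than directly via $|\hat f|\le c_1\sqrt V$ as you do; neither affects correctness.
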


\bigskip

We are now ready to state our result for the convergence in \eqref{eq:ConvergenceToGradient}. 

\bigskip

\begin{theo} \label{T.Convergence}
Under Assumption \ref{A.Vuniformity}, $\alpha(\boldsymbol{\theta})$ is differentiable at $\boldsymbol{\theta}_0$ and
$$E_{\boldsymbol{\theta}_0} \left[\alpha_n \nabla L_n(\boldsymbol{\theta}_0) \right] \to \nabla \alpha(\boldsymbol{\theta}_0) \qquad n \to \infty.$$
\end{theo}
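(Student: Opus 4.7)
The plan has two parts that share the Poisson solution $\hat f$ from Lemma~\ref{L.Poisson} as the unifying object. For differentiability, I would start from the identity $\pi(\boldsymbol{\theta})P(\boldsymbol{\theta}) = \pi(\boldsymbol{\theta})$ combined with Poisson's equation \eqref{eq:PoissonEq} to obtain
$$\alpha(\boldsymbol{\theta}) - \alpha(\boldsymbol{\theta}_0) = \pi(\boldsymbol{\theta})\bigl(P(\boldsymbol{\theta}) - P(\boldsymbol{\theta}_0)\bigr)\hat f,$$
since $\pi(\boldsymbol{\theta})(\hat f - P(\boldsymbol{\theta}_0)\hat f) = \pi(\boldsymbol{\theta})P(\boldsymbol{\theta})\hat f - \pi(\boldsymbol{\theta})P(\boldsymbol{\theta}_0)\hat f$. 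Specializing to $\boldsymbol{\theta} = \boldsymbol{\theta}_0 + h\mathbf{e}_i$ and dividing by $h$, I would use Assumption~\ref{A.Vuniformity}(v) for the convergence of the difference quotient $h^{-1}(P(\boldsymbol{\theta}_0+h\mathbf{e}_i) - P(\boldsymbol{\theta}_0))$ to $P^{(i)}(\boldsymbol{\theta}_0)$ in $V$-operator norm, and Lemma~\ref{L.ChainProperties} for the convergence of $\pi(\boldsymbol{\theta}_0 + h\mathbf{e}_i)$ to $\pi(\boldsymbol{\theta}_0)$ in $V$-total variation. Since $|\hat f| \leq c_1\sqrt{V} \leq c_1 V$, a triangle-inequality argument yields
$$\frac{\partial}{\partial \theta_i}\alpha(\boldsymbol{\theta}_0) = \pi(\boldsymbol{\theta}_0) P^{(i)}(\boldsymbol{\theta}_0)\hat f.$$

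For the convergence of $E_{\boldsymbol{\theta}_0}[\alpha_n \nabla L_n(\boldsymbol{\theta}_0)]$, I would exploit that $\nabla L_n(\boldsymbol{\theta}_0)$ has mean zero (Lemma~\ref{L.MG_cond}), so the expectation is unchanged after centering $\alpha_n$ by $\alpha(\boldsymbol{\theta}_0)$. Applying the telescoping representation~\eqref{eq:alphas} gives
$$E_{\boldsymbol{\theta}_0}\bigl[\alpha_n \nabla L_n(\boldsymbol{\theta}_0)\bigr] = \frac{1}{n} E_{\boldsymbol{\theta}_0}\bigl[Z_n \nabla L_n(\boldsymbol{\theta}_0)\bigr] + \frac{1}{n} E_{\boldsymbol{\theta}_0}\bigl[(\hat f(\Phi_0) - \hat f(\Phi_n))\nabla L_n(\boldsymbol{\theta}_0)\bigr].$$
The boundary term vanishes by Cauchy-Schwarz: $\hat f(\Phi_n)$ is bounded in $L^2$ (via $|\hat f|\leq c_1\sqrt{V}$ and the iterated drift bound), while each component of $\nabla L_n(\boldsymbol{\theta}_0)$, being a square-integrable martingale with uniformly bounded conditional second moments (from Assumption~\ref{A.Vuniformity}(vi)), has $L^2$ norm of order $\sqrt{n}$.

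The main work sits in the first term. I would use that both $Z_n$ and $M_n^k$ are $\mathcal{F}_j$-martingales: for $j<l$, conditioning on $\mathcal{F}_{l-1}$ kills $D_l^k$, and for $j>l$, conditioning on $\mathcal{F}_{j-1}$ kills $\hat f(\Phi_j) - P(\boldsymbol{\theta}_0)\hat f(\Phi_{j-1})$. This collapses the double sum to the diagonal
$$E_{\boldsymbol{\theta}_0}[Z_n M_n^k] = \sum_{j=1}^n E_{\boldsymbol{\theta}_0}\!\left[\bigl(\hat f(\Phi_j) - P(\boldsymbol{\theta}_0)\hat f(\Phi_{j-1})\bigr) D_j^k\right].$$
Conditioning each diagonal term on $\mathcal{F}_{j-1}$ and using the definition of $D_j^k$ together with $E_{\boldsymbol{\theta}_0}[D_j^k\mid\mathcal{F}_{j-1}]=0$ identifies the conditional expectation as $P^{(k)}(\boldsymbol{\theta}_0)\hat f(\Phi_{j-1})$. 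A Cesaro argument then finishes: since $P^{(k)}(\boldsymbol{\theta}_0)\hat f \in L_V^\infty$ (from $\vertiii{P^{(k)}(\boldsymbol{\theta}_0)}_V<\infty$ and $|\hat f|_V\leq c_1$) and $\boldsymbol{\Phi}$ is $V$-ergodic by Lemma~\ref{L.ChainProperties}, we have $E_{\boldsymbol{\theta}_0}[P^{(k)}(\boldsymbol{\theta}_0)\hat f(\Phi_{j-1})] \to \pi(\boldsymbol{\theta}_0)P^{(k)}(\boldsymbol{\theta}_0)\hat f = \partial_{\theta_k}\alpha(\boldsymbol{\theta}_0)$, and Cesaro averages inherit the same limit.

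The main obstacle is the martingale orthogonality step together with the Fubini-type integrability needed to legitimately split and reassemble the double sum; the integrability bookkeeping relies crucially on $P^{(k)}(\boldsymbol{\theta}_0)\hat f$ and $(\hat f - P(\boldsymbol{\theta}_0)\hat f)^2$ living in $L_V^\infty$, which in turn forces one to combine the $\sqrt{V}$-bound on $\hat f$ from Lemma~\ref{L.Poisson}, the $V$-norm of $P^{(k)}(\boldsymbol{\theta}_0)$ from Assumption~\ref{A.Vuniformity}(v), and the pointwise second-moment bound $g_{kk}\in L_V^\infty$ from Assumption~\ref{A.Vuniformity}(vi). Once these ingredients are lined up, the two parts match because both produce $\pi(\boldsymbol{\theta}_0)P^{(k)}(\boldsymbol{\theta}_0)\hat f$ as the limiting value.
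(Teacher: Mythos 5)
Your proposal is correct and follows essentially the same route as the paper: center $\alpha_n$ using the mean-zero martingale $\nabla L_n(\boldsymbol{\theta}_0)$, apply the Poisson-equation decomposition \eqref{eq:alphas}, kill the boundary term by Cauchy--Schwarz, collapse the cross term to the diagonal by martingale orthogonality so that the limit is the Cesàro average of $E_{\boldsymbol{\theta}_0}[P^{(i)}(\boldsymbol{\theta}_0)\hat f(\Phi_{k-1})] \to \pi(\boldsymbol{\theta}_0)P^{(i)}(\boldsymbol{\theta}_0)\hat f$, and identify this with $\partial_{\theta_i}\alpha(\boldsymbol{\theta}_0)$ via the identity $\alpha(\boldsymbol{\theta})-\alpha(\boldsymbol{\theta}_0)=\pi(\boldsymbol{\theta})(P(\boldsymbol{\theta})-P(\boldsymbol{\theta}_0))\hat f$ together with Assumption~\ref{A.Vuniformity}(v) and Lemma~\ref{L.ChainProperties}. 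The only differences are cosmetic (e.g., you bound $|P^{(i)}(\boldsymbol{\theta}_0)\hat f|_V$ via $\vertiii{P^{(i)}(\boldsymbol{\theta}_0)}_V$ where the paper uses Cauchy--Schwarz with $g_{ii}$, and you treat the $\hat f(\Phi_0)$ term by Cauchy--Schwarz where the paper notes it vanishes exactly).
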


\section{A first likelihood-ratio estimator} \label{S.FirstEstimator}

In view of Theorem \ref{T.Convergence}, the remainder of the paper is devoted to the analysis of potential estimators for $E_{\boldsymbol{\theta}_0} \left[ \alpha_n \nabla L_n(\boldsymbol{\theta}_0)\right]$. An obvious first choice would be to consider 
\begin{equation} \label{eq:FirstBadEstimator}
\alpha_n \nabla L_n(\boldsymbol{\theta}_0)
\end{equation}
itself.  Unfortunately, as mentioned in the introduction, $\alpha_n \nabla L_n(\boldsymbol{\theta}_0)$ does not converge to an a.s. finite random variable; in fact, under additional assumptions, $n^{-1/2} \alpha_n \nabla L_n(\boldsymbol{\theta}_0)$ converges in distribution to a multivariate normal random vector, which implies that $\alpha_n \nabla L_n(\boldsymbol{\theta}_0) $ fails to converge at all. This observation is a simple consequence of the following weak convergence result, which will also be helpful in the analysis of the estimators considered in Section \ref{S.SecondEstimator}.  

Throughout the rest of the paper let $D([0,1], \mathbb{R}^d)$ denote the space of right-continuous $\mathbb{R}^d$-valued functions on $[0,1]^d$ with left limits equipped with the standard Skorohod topology; we use $\Rightarrow$ to denote weak  convergence.  From now on, the Markov chain $\boldsymbol{\Phi}$ is always assumed to evolve according to $P(\boldsymbol{\theta}_0)$. 

\begin{theo} \label{T.FCLT}
Suppose that Assumption \ref{A.Vuniformity} is satisfied and let $\hat f$ be the solution to Poisson's equation \eqref{eq:PoissonEq} from Lemma \ref{L.Poisson}. Define the functions $g_{ij}$ according to 
\begin{align*}
g_{ij}(x) &= \int_{\sf X} \frac{\partial}{\partial \theta_i}  p(\boldsymbol{\theta}_0, x, y) \, \frac{\partial}{\partial \theta_j}  p(\boldsymbol{\theta}_0, x, y) P(\boldsymbol{\theta}_0,x, dy), \qquad 1 \leq i, j \leq d, \\
g_{i0}(x) = g_{0i}(x) &= \int_{\sf X} \hat f(y) \frac{\partial}{\partial \theta_i} p(\boldsymbol{\theta}_0, x, y) P(\boldsymbol{\theta}_0,x, dy), \qquad 1 \leq i \leq d, \\
g_{00}(x) &= \int_{\sf X} \left( \hat f(y) - P(\boldsymbol{\theta}_0) \hat f(x) \right)^2 P(\boldsymbol{\theta}_0,x, dy) = P(\boldsymbol{\theta}_0) \hat f^2(x) - \left( P(\boldsymbol{\theta}_0) \hat f(x) \right)^2.
\end{align*}
Let $G(x) \in \mathbb{R}^{(d+1)\times(d+1)}$ be the matrix whose $(i,j)$th element is $g_{ij}(x)$ for $0 \leq i,j \leq d$. Then, 
$$ \left( n^{-1/2} \lfloor n \cdot \rfloor (\alpha_{\lfloor n \cdot \rfloor} - \alpha(\boldsymbol{\theta}_0)), \, n^{-1/2} \nabla L_{\lfloor n \cdot \rfloor}(\boldsymbol{\theta}_0)' \right) \Rightarrow B' \qquad n \to \infty,$$
in $D([0,1], \mathbb{R}^{d+1})$, where $B(t) = (B_0(t), B_1(t), \dots, B_d(t))'$ is a $(d+1)$-dimensional mean zero Brownian motion with covariance matrix $\pi(\boldsymbol{\theta}_0) G = (\pi(\boldsymbol{\theta}_0) g_{ij})$. 
\end{theo}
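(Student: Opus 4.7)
The strategy is to reduce the statement to a functional martingale CLT for a $(d+1)$-dimensional square-integrable martingale. From \eqref{eq:alphas} and Lemma \ref{L.Poisson},
$$\lfloor nt\rfloor(\alpha_{\lfloor nt \rfloor} - \alpha(\boldsymbol{\theta}_0)) = Z_{\lfloor nt \rfloor} + \hat f(\Phi_0) - \hat f(\Phi_{\lfloor nt \rfloor}).$$
Using $|\hat f| \leq c_1 \sqrt{V}$ and the iterated drift bound $\sup_k E_{\boldsymbol{\theta}_0}[V(\Phi_k)] < \infty$ (from the remark after Assumption \ref{A.Vuniformity}), the boundary piece $n^{-1/2}\sup_{t \le 1}|\hat f(\Phi_0) - \hat f(\Phi_{\lfloor nt \rfloor})|$ is asymptotically negligible in probability. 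It therefore suffices to prove the joint FCLT for
$$N_k := \sum_{j=1}^k \xi_j, \qquad \xi_j := \bigl(\hat f(\Phi_j) - P(\boldsymbol{\theta}_0)\hat f(\Phi_{j-1}),\, D_j^1, \dots, D_j^d\bigr)',$$
which by Lemmas \ref{L.MG_cond} and \ref{L.Poisson} is an $\mathbb{R}^{d+1}$-valued square-integrable martingale adapted to $\{\mathcal{F}_k\}$.

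Next I would identify $E[\xi_j \xi_j' \mid \mathcal{F}_{j-1}] = G(\Phi_{j-1})$: the $(i,j)$ block with $i,j \ge 1$ is $g_{ij}(\Phi_{j-1})$ since $p(\boldsymbol{\theta}_0, \cdot, \cdot) = 1$ a.s.\ and $D_j^i = \tfrac{\partial}{\partial \theta_i} p(\boldsymbol{\theta}_0, \Phi_{j-1}, \Phi_j)$; the $(0,0)$ entry is $g_{00}(\Phi_{j-1})$ by conditioning; and for the cross entries, since $E[D_j^i \mid \mathcal{F}_{j-1}] = 0$,
$$E\bigl[(\hat f(\Phi_j) - P(\boldsymbol{\theta}_0)\hat f(\Phi_{j-1})) D_j^i \mid \mathcal{F}_{j-1}\bigr] = \int_{\sf X} \hat f(y)\, \tfrac{\partial}{\partial\theta_i} p(\boldsymbol{\theta}_0, \Phi_{j-1}, y)\, P(\boldsymbol{\theta}_0, \Phi_{j-1}, dy) = g_{0i}(\Phi_{j-1}).$$
Every entry of $G$ belongs to $L^\infty_V$: Assumption \ref{A.Vuniformity}(vi) for the diagonal $g_{ii}$; $|\hat f|^2 \le c_1^2 V$ for $g_{00}$; and Cauchy--Schwarz inside the integrals for the off-diagonal entries. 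Because $\pi(\boldsymbol{\theta}_0) V < \infty$ (Lemma \ref{L.Poisson}), the limiting matrix $\pi(\boldsymbol{\theta}_0) G$ is finite entrywise.

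I would then verify the two standard hypotheses of a multivariate martingale FCLT (e.g., Ethier--Kurtz, Theorem 7.1.4): first, the asymptotic quadratic variation
$$n^{-1}\sum_{k=1}^{\lfloor nt \rfloor} G(\Phi_{k-1}) \;\longrightarrow\; t\, \pi(\boldsymbol{\theta}_0) G$$
in probability for every $t \in [0,1]$, which follows entrywise from the $V$-ergodicity of $\boldsymbol{\Phi}$ under $P(\boldsymbol{\theta}_0)$ (Lemma \ref{L.ChainProperties}); and second, the conditional Lindeberg condition $n^{-1}\sum_{k=1}^n E[|\xi_k|^2 \mathbf{1}_{\{|\xi_k|^2 > \epsilon n\}} \mid \mathcal{F}_{k-1}] \to 0$ in probability, which follows from $E_{\pi(\boldsymbol{\theta}_0)}|\xi_1|^2 < \infty$ by a truncation plus ergodic argument. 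The limit is then a continuous Gaussian martingale with deterministic linear quadratic variation $t\,\pi(\boldsymbol{\theta}_0)G$, i.e., the Brownian motion $B$ described in the statement. The most delicate step is the Lindeberg condition, because the martingale differences $D_j^i$ are not a priori bounded --- only their conditional second moments $g_{ii}(\Phi_{j-1})$ are controlled --- and the chain need not start in stationarity; handling this requires combining the geometric $V$-drift with the $\sqrt V$ bound on $\hat f$ so that ergodic averages of both the quadratic variation and the truncated second moments inherit the right stationary limit irrespective of the initial law.
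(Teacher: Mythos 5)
Your plan follows essentially the same route as the paper: reduce via \eqref{eq:alphas} to the $(d+1)$-dimensional square-integrable martingale with differences $\xi_j=(\hat f(\Phi_j)-P(\boldsymbol{\theta}_0)\hat f(\Phi_{j-1}),D_j^1,\dots,D_j^d)'$, identify $E[\xi_j\xi_j'\mid\mathcal F_{j-1}]=G(\Phi_{j-1})$, get the linear limit $t\,\pi(\boldsymbol{\theta}_0)G$ of the predictable quadratic covariation from $V$-ergodicity, and dispose of the unbounded jumps by truncation at level $\sqrt n$ combined with convergence of ergodic averages of the truncated conditional second moments. The only cosmetic difference is the reference: the paper verifies the max-jump conditions of Theorem~2.1(ii) in \cite{Whitt_07} rather than the conditional Lindeberg condition of Ethier--Kurtz, but the verification is the same truncation-plus-ergodicity computation you describe, so this is not a genuinely different argument.

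One step of your justification, however, would not go through as written: the negligibility of the boundary term $n^{-1/2}\sup_{t\le 1}|\hat f(\Phi_0)-\hat f(\Phi_{\lfloor nt\rfloor})|$ does not follow from $|\hat f|\le c_1\sqrt V$ together with $\sup_k E_{\boldsymbol{\theta}_0}[V(\Phi_k)]<\infty$ alone. A union bound with Chebyshev gives only
$$P\Bigl(\max_{k\le n}|\hat f(\Phi_k)|>\epsilon\sqrt n\Bigr)\;\le\;\frac{1}{\epsilon^2 n}\sum_{k=1}^n E_{\boldsymbol{\theta}_0}[\hat f(\Phi_k)^2]\;=\;O(1),$$
i.e.\ a bounded constant rather than a vanishing quantity; uniformly bounded second moments of the running maximum's increments are not enough. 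You need either uniform integrability of $\{\hat f(\Phi_k)^2\}$ (which can be extracted by the same fixed-level truncation and ergodic-average argument you use for the Lindeberg condition, since $\hat f^2\le c_1^2 V$ and $\pi(\boldsymbol{\theta}_0)V<\infty$), or, as the paper does, an appeal to Theorem~17.3.3 of \cite{Meyn_Tweedie}, which for a $V$-ergodic chain with $|\hat f^2|_V<\infty$ gives $\max_{1\le k\le n}\hat f(\Phi_k)^2/n\to 0$ a.s.\ under $P(\boldsymbol{\theta}_0)$ regardless of the initial distribution. With that repair the rest of your outline is sound and matches the paper's proof.
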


In view of this theorem we have the following result for $n^{-1/2} \alpha_n \nabla L_n(\boldsymbol{\theta}_0)$.

\begin{prop} \label{P.CLTbadestimator}
Suppose that Assumption \ref{A.Vuniformity} is satisfied and define for $1 \leq i,j \leq d$ the functions $g_{ij}$ according to Theorem \ref{T.FCLT}. Then,  
$$n^{-1/2} \alpha_n \nabla L_n(\boldsymbol{\theta}_0) \Rightarrow \alpha(\boldsymbol{\theta}_0) Z \qquad n \to \infty,$$
where $Z $ is a $d-$dimensional multivariate normal random vector having mean zero and covariance matrix $\Sigma = (\sigma_{ij})$, where $\sigma_{ij}  = \pi(\boldsymbol{\theta}_0) g_{ij}$ for $1\leq i,j \leq d$. 
\end{prop}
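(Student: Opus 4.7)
The plan is to derive this as a straightforward corollary of Theorem \ref{T.FCLT} combined with Slutsky's theorem.

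First I would perform the algebraic decomposition
\begin{equation*}
n^{-1/2} \alpha_n \nabla L_n(\boldsymbol{\theta}_0) = \alpha(\boldsymbol{\theta}_0) \cdot n^{-1/2} \nabla L_n(\boldsymbol{\theta}_0) + (\alpha_n - \alpha(\boldsymbol{\theta}_0)) \cdot n^{-1/2} \nabla L_n(\boldsymbol{\theta}_0),
\end{equation*}
isolating the only piece of $\alpha_n$ that contributes to the non-degenerate limit from the piece that will turn out to be negligible.

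Next, applying Theorem \ref{T.FCLT} and the continuous mapping theorem (evaluation at $t=1$, which is a continuity point of the limiting Brownian motion), I would obtain the joint convergence
\begin{equation*}
\bigl( n^{1/2}(\alpha_n - \alpha(\boldsymbol{\theta}_0)), \, n^{-1/2} \nabla L_n(\boldsymbol{\theta}_0)' \bigr) \Rightarrow (B_0(1), B_1(1), \ldots, B_d(1))',
\end{equation*}
where the right-hand side is a mean-zero Gaussian vector with covariance $\pi(\boldsymbol{\theta}_0) G$. From this, the marginal $n^{-1/2} \nabla L_n(\boldsymbol{\theta}_0) \Rightarrow Z$, where $Z = (B_1(1), \ldots, B_d(1))'$ has covariance $\Sigma = (\pi(\boldsymbol{\theta}_0) g_{ij})_{1 \leq i,j \leq d}$, as claimed. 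Hence $\alpha(\boldsymbol{\theta}_0) \cdot n^{-1/2} \nabla L_n(\boldsymbol{\theta}_0) \Rightarrow \alpha(\boldsymbol{\theta}_0) Z$, which will be the contribution of the first term of the decomposition.

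For the second term I would use the same joint convergence to deduce that $\alpha_n - \alpha(\boldsymbol{\theta}_0) = O_p(n^{-1/2})$, and in particular $\alpha_n - \alpha(\boldsymbol{\theta}_0) \to 0$ in probability, while $n^{-1/2} \nabla L_n(\boldsymbol{\theta}_0)$ is $O_p(1)$; therefore the product goes to $0$ in probability. A final application of Slutsky's theorem combines the two terms and yields the stated weak limit. There is no real obstacle here — the theorem is essentially a repackaging of Theorem \ref{T.FCLT}; the only step requiring a moment's care is confirming that the joint convergence (not just the marginals) is needed to conclude the $O_p$ bound on $\alpha_n - \alpha(\boldsymbol{\theta}_0)$ simultaneously with the weak limit of $n^{-1/2} \nabla L_n(\boldsymbol{\theta}_0)$, which is automatic from the functional CLT.
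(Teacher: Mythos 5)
Your proof is correct and takes essentially the same route as the paper: both reduce the claim to Theorem \ref{T.FCLT} evaluated at $t=1$ together with Slutsky's theorem. The only cosmetic difference is that the paper obtains $\alpha_n \to \alpha(\boldsymbol{\theta}_0)$ almost surely from the Markov chain law of large numbers (Theorem 17.0.1 of Meyn and Tweedie) and multiplies directly by $n^{-1/2}\nabla L_n(\boldsymbol{\theta}_0)$, whereas you extract $\alpha_n - \alpha(\boldsymbol{\theta}_0) = O_p(n^{-1/2})$ from the functional CLT itself (joint convergence is not even needed for this, since the remainder converges in probability to zero); either suffices.
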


\begin{proof}
By Lemma \ref{L.ChainProperties} $\boldsymbol{\Phi}$ is $V$-ergodic, and since $|f|_{\sqrt{V}} < \infty$ we have $\pi(\boldsymbol{\theta}_0) |f| < \infty$. Then, by Theorem 17.0.1 in \cite{Meyn_Tweedie},
$$\lim_{n \to \infty} \alpha_n = \lim_{n \to \infty}  \frac{1}{n} \sum_{j=0}^{n-1} f(\Phi_j) = \pi(\boldsymbol{\theta}_0) f = \alpha(\boldsymbol{\theta}_0) \qquad \text{a.s. } P(\boldsymbol{\theta}_0).$$
By Theorem \ref{T.FCLT} we have
$$n^{-1/2} \nabla L_n(\boldsymbol{\theta}_0) \Rightarrow B(1),$$
where $B(t) = (B_1(t), \dots, B_d(t))'$ is a $d$-dimensional mean zero Brownian motion with covariance matrix $\Sigma$.  It follows by Slutsky's lemma that  
$$n^{-1/2} \alpha_n \nabla L_n(\boldsymbol{\theta}_0) \Rightarrow \alpha(\boldsymbol{\theta}_0) Z \qquad n \to \infty,$$
where $Z = B(1)$.
\end{proof}

\bigskip

Since $\alpha_n \nabla L_n(\boldsymbol{\theta}_0)$ does not converge as $n \to \infty$, we can define a new estimator with smaller variance by using as a control variate $\nabla L_n(\boldsymbol{\theta}_0)$, that is, we seek an estimator of the form
$$Y(C) \triangleq \alpha_n \nabla L_n(\boldsymbol{\theta}_0) + C \nabla L_n(\boldsymbol{\theta}_0),$$
where $C$ is a $d\times d$ constant matrix. Let $\Sigma_{Y(C)}$ be the covariance matrix of $Y(C)$, 
$$\Sigma_{Y(C)} = E_{\boldsymbol{\theta}_0} [(Y(C) - E_{\boldsymbol{\theta}_0}[Y(C)])(Y(C)- E_{\boldsymbol{\theta}_0}[Y(C)])'].$$
Our goal is to minimize the so-called generalized variance of $Y(C)$, defined as the determinant of $\Sigma_{Y(C)}$. The optimal choice for $C$ is given by
$$C_n^* = E_{\boldsymbol{\theta}_0}[(\alpha_n\nabla L_n(\boldsymbol{\theta}_0) - E_{\boldsymbol{\theta}_0}[\alpha_n \nabla L_n(\boldsymbol{\theta}_0)])(\nabla L_n(\boldsymbol{\theta}_0))'] \left( E_{\boldsymbol{\theta}_0}[(\nabla L_n(\boldsymbol{\theta}_0))(\nabla L_n(\boldsymbol{\theta}_0))'] \right)^{-1}$$
(see \cite{Rubin_Marc_85}). 
In the notation of Proposition \ref{P.CLTbadestimator}, 
$$C_n^* = E_{\boldsymbol{\theta}_0}[(\alpha_n M_n - E_{\boldsymbol{\theta}_0}[\alpha_n M_n]) M_n'] \left( E_{\boldsymbol{\theta}_0}[M_n M_n'] \right)^{-1} = E_{\boldsymbol{\theta}_0} [\alpha_n M_n M_n'] \left( E_{\boldsymbol{\theta}_0}[M_n M_n'] \right)^{-1}.$$
It can be shown (following the same arguments used in the proof of Theorem \ref{T.FCLT}) that 
\begin{align*}
\frac{1}{n} E_{\boldsymbol{\theta}_0} [\alpha_n M_nM_n'] &\to \alpha(\boldsymbol{\theta}_0) \Sigma , \qquad \text{and} \\
\frac{1}{n} E_{\boldsymbol{\theta}_0} [M_nM_n'] &\to \Sigma ,
\end{align*}
as $n \to \infty$. Therefore, $C_n^* \to \alpha(\boldsymbol{\theta}_0) \Sigma \Sigma^{-1} = \alpha(\boldsymbol{\theta}_0)I$, where $I$ is the identity matrix of $\mathbb{R}^{d\times d}$. We then have that $\alpha(\boldsymbol{\theta}_0)I$ is the asymptotically optimal choice for the control variate coefficient and our new suggested estimator is 
$$Y(C^*_n) = (\alpha_n - \alpha(\boldsymbol{\theta}_0))\nabla L_n(\boldsymbol{\theta}_0).$$
Using again Theorem \ref{T.FCLT} we obtain the following convergence result. 

\bigskip

\begin{prop} \label{P.CLTgoodestimator}
Suppose that Assumption \ref{A.Vuniformity} is satisfied and let $\hat f$ be the solution to Poisson's equation \eqref{eq:PoissonEq} from Lemma \ref{L.Poisson}. Define the functions $g_{ij}$ for $0 \leq i,j \leq d$ according to Theorem~\ref{T.FCLT}. Then,
\begin{equation*} 
Y(C^*_n)  \Rightarrow Z_0 \hat Z \qquad n \to \infty,
\end{equation*}
where $Z = (Z_0, Z_1\dots, Z_d)'$ is a $(d+1)$-dimensional multivariate normal random vector having covariance matrix $\Sigma = (\sigma_{ij})$, where $\sigma_{ij} = \pi(\boldsymbol{\theta}_0) g_{ij}$ for $0 \leq i,j \leq d$, and $\hat Z = (Z_1, \dots, Z_d)'$.
\end{prop}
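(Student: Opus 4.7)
The plan is to reduce the proposition to a direct application of Theorem \ref{T.FCLT} evaluated at $t=1$, combined with the continuous mapping theorem. The key algebraic observation is that subtracting $\alpha(\boldsymbol{\theta}_0)$ turns the estimator into a product of two $O(\sqrt{n})$-scale quantities whose product is $O(1)$:
$$Y(C_n^*) = (\alpha_n - \alpha(\boldsymbol{\theta}_0))\,\nabla L_n(\boldsymbol{\theta}_0) = \bigl(\sqrt{n}(\alpha_n - \alpha(\boldsymbol{\theta}_0))\bigr)\cdot\bigl(n^{-1/2}\nabla L_n(\boldsymbol{\theta}_0)\bigr),$$
so that the two $\sqrt{n}$ factors cancel. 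This is precisely why the centered estimator converges, while $\alpha_n\nabla L_n(\boldsymbol{\theta}_0)$ in Proposition \ref{P.CLTbadestimator} grows like $\sqrt{n}$.

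First, I would specialize the functional CLT in Theorem \ref{T.FCLT} to the endpoint $t=1$. Since the limiting $(d+1)$-dimensional Brownian motion $B$ has continuous sample paths almost surely, evaluation at $t=1$ is a continuous functional on $D([0,1],\mathbb{R}^{d+1})$ at $B$, and the continuous mapping theorem yields the joint finite-dimensional convergence
$$\bigl(\sqrt{n}(\alpha_n - \alpha(\boldsymbol{\theta}_0)),\; n^{-1/2}\nabla L_n(\boldsymbol{\theta}_0)'\bigr) \Rightarrow B(1)' = (Z_0, Z_1, \dots, Z_d),$$
a $(d+1)$-dimensional mean-zero Gaussian vector with covariance matrix $\Sigma = (\pi(\boldsymbol{\theta}_0)g_{ij})_{0 \leq i,j \leq d}$ as in Theorem \ref{T.FCLT}.

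Second, I would apply the continuous mapping theorem once more with the bilinear map $\phi:\mathbb{R}\times\mathbb{R}^d \to \mathbb{R}^d$ defined by $\phi(a,\mathbf{b}) = a\mathbf{b}$, which is continuous on all of $\mathbb{R}\times\mathbb{R}^d$. Combined with the joint convergence above, this gives
$$Y(C_n^*) = \phi\bigl(\sqrt{n}(\alpha_n - \alpha(\boldsymbol{\theta}_0)),\; n^{-1/2}\nabla L_n(\boldsymbol{\theta}_0)\bigr) \Rightarrow \phi(Z_0,\hat Z) = Z_0\hat Z,$$
which is exactly the claim.

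I do not foresee any real obstacle: once Theorem \ref{T.FCLT} is in hand, this proposition is essentially a corollary. The only technical point worth checking is that joint weak convergence (as opposed to marginal convergence) is what is being used, since the limit $Z_0\hat Z$ depends on the dependence structure between $Z_0$ and $\hat Z$ encoded by the off-diagonal entries $\pi(\boldsymbol{\theta}_0)g_{0i}$ of $\Sigma$; but this joint convergence is delivered by Theorem \ref{T.FCLT} itself, so no additional argument (such as a Slutsky-type decoupling) is needed.
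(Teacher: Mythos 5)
Your proposal is correct and matches the paper's own proof: both specialize Theorem \ref{T.FCLT} to $t=1$ to get joint convergence of $\bigl(\sqrt{n}(\alpha_n-\alpha(\boldsymbol{\theta}_0)),\,n^{-1/2}\nabla L_n(\boldsymbol{\theta}_0)'\bigr)$ to $B(1)'$ and then apply the continuous mapping theorem with the product map. Your remark on the need for joint (rather than marginal) convergence and the continuity of evaluation at $t=1$ just makes explicit what the paper leaves implicit.
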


\begin{proof}
By Theorem \ref{T.FCLT} we have
$$\left( n^{1/2}  (\alpha_n - \alpha(\boldsymbol{\theta}_0)), \,  n^{-1/2} \nabla L_n(\boldsymbol{\theta}_0)'\right) \Rightarrow B(1) \qquad n \to \infty,$$
where $B(t) = (B_0(t), B_1(t), \dots, B_d(t))'$ is a $(d+1)$-dimensional mean zero Brownian motion with covariance matrix $\Sigma = \pi(\boldsymbol{\theta}_0)G$. Let $Z = B(1)$ and $\hat Z = (B_1(1), \dots, B_d(1))$. 

Then, by the continuous mapping principle,
$$(\alpha_n - \alpha(\boldsymbol{\theta}_0)) \nabla L_n(\boldsymbol{\theta}_0)  \Rightarrow Z_0 \hat Z \qquad n \to \infty.$$
\end{proof}

We conclude that $Y(C_n^*)$ has the desired convergence properties and is a suitable estimator for $E_{\boldsymbol{\theta}_0} \left[ \alpha_n \nabla L_n(\boldsymbol{\theta}_0) \right]$. In the next section we consider other alternatives.

\section{An integral-type estimator} \label{S.SecondEstimator}

As mentioned in the introduction, our second proposed estimator is obtained by first deriving an alternative representation for $E_{\boldsymbol{\theta}_0} \left[ \alpha_n \nabla L_n(\boldsymbol{\theta}_0) \right]$ in terms of a discrete stochastic integral. More precisely, we exploit the martingale properties of $\nabla L_n(\boldsymbol{\theta}_0)$ to obtain that:
\begin{align*}
E_{\boldsymbol{\theta}_0} [\alpha_n \nabla L_n(\boldsymbol{\theta}_0)] &= E_{\boldsymbol{\theta}_0} \left[ \frac{1}{n} \sum_{l=0}^{n-1} f(\Phi_l) \sum_{k=1}^n D_k \right] \\
&= \frac{1}{n} \sum_{k=1}^{n} \sum_{l=0}^{k-1} E_{\boldsymbol{\theta}_0}[ f(\Phi_l) E_{\boldsymbol{\theta}_0}[D_k| \mathcal{F}_{k-1}]] +  E_{\boldsymbol{\theta}_0} \left[\frac{1}{n} \sum_{k=1}^{n} \sum_{l=k}^{n-1}  f(\Phi_l) D_k \right]    \\
&= E_{\boldsymbol{\theta}_0} \left[\frac{1}{n} \sum_{k=1}^{n-1} \sum_{l=k}^{n-1}  f(\Phi_l) D_k \right] ,
\end{align*}
where $D_k^i = \frac{\partial}{\partial \theta^i}  p(\boldsymbol{\theta}_0,\Phi_{k-1},\Phi_k)$ and $D_k = (D_k^1, \dots, D_k^d)'$. This suggests using 
\begin{equation} \label{eq:est2}
Y_n \triangleq \frac{1}{n} \sum_{k=1}^{n-1} \sum_{l=k}^{n-1} f(\Phi_l) D_k
\end{equation}
as an estimator for $E_{\boldsymbol{\theta}_0} \left[ \alpha_n \nabla L_n(\boldsymbol{\theta}_0) \right]$. 

Unfortunately, just as the estimator $\alpha_n \nabla L_n(\boldsymbol{\theta}_0)$, $Y_n$ as defined above fails to converge to an a.s. finite random vector. This is a consequence of Theorem \ref{T.FCLT} again.

\begin{prop} \label{P.CLTbadestimator2}
Suppose that Assumption \ref{A.Vuniformity} is satisfied and define for $1 \leq i,j \leq d$ the functions $g_{ij}$ according to Theorem \ref{T.FCLT}. Then,  
$$n^{-1/2} Y_n \Rightarrow \int_0^1 \alpha(\boldsymbol{\theta}_0)(1-s) I dB(s) \qquad n \to \infty$$
in $D([0,1], \mathbb{R}^{d})$, where $B$ is a $d-$dimensional mean zero Brownian motion with covariance matrix $\Sigma = (\sigma_{ij})$, with $\sigma_{ij} = \pi(\boldsymbol{\theta}_0) g_{ij}$ for $1\leq i,j \leq d$, and $I$ is the  identity matrix of $\mathbb{R}^{d\times d}$. 
\end{prop}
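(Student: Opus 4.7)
The plan is to start by switching the order of summation in the definition of $Y_n$: the index set $\{(k,l) : 1 \le k \le n-1,\, k \le l \le n-1\}$ equals $\{(k,l) : 1 \le l \le n-1,\, 1 \le k \le l\}$, so with $M_l = \sum_{k=1}^{l} D_k$ (the martingale of Lemma \ref{L.MG_cond}) we obtain the cleaner expression
$$Y_n = \frac{1}{n} \sum_{l=1}^{n-1} f(\Phi_l) M_l.$$
Splitting $f(\Phi_l) = \alpha(\boldsymbol{\theta}_0) + (f(\Phi_l) - \alpha(\boldsymbol{\theta}_0))$ gives
$$n^{-1/2} Y_n \;=\; \alpha(\boldsymbol{\theta}_0)\, n^{-3/2} \sum_{l=1}^{n-1} M_l \;+\; n^{-3/2} \sum_{l=1}^{n-1} (f(\Phi_l)-\alpha(\boldsymbol{\theta}_0))\, M_l,$$
and I will show the first term converges weakly to $\int_0^1 \alpha(\boldsymbol{\theta}_0)(1-s)\, I\, dB(s)$ and the second is $o_p(1)$.

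For the first term I would apply Theorem \ref{T.FCLT} to the last $d$ coordinates of the joint process, which gives $n^{-1/2} M_{\lfloor n\cdot\rfloor} \Rightarrow B(\cdot)$ in $D([0,1],\mathbb{R}^d)$. Since the integration functional $x \mapsto \int_0^1 x(s)\,ds$ is continuous in the Skorohod topology, the continuous mapping theorem together with the identification $n^{-3/2}\sum_{l=1}^{n-1} M_l = \int_0^1 n^{-1/2} M_{\lfloor ns\rfloor}\,ds + O(n^{-1/2})$ yields $n^{-3/2}\sum_{l=1}^{n-1} M_l \Rightarrow \int_0^1 B(s)\,ds$. Stochastic integration by parts (with $\phi(s) = 1-s$) gives $\int_0^1 B(s)\,ds = \int_0^1 (1-s)\,dB(s)$, which after factoring in the deterministic scalar $\alpha(\boldsymbol{\theta}_0)$ is exactly the stated limit.

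For the second term I would use Abel summation. Setting $S_l = \sum_{j=0}^{l-1}(f(\Phi_j) - \alpha(\boldsymbol{\theta}_0))$, so that $f(\Phi_l) - \alpha(\boldsymbol{\theta}_0) = S_{l+1} - S_l$, summation by parts yields
$$\sum_{l=1}^{n-1} (f(\Phi_l)-\alpha(\boldsymbol{\theta}_0))\, M_l \;=\; S_n M_{n-1} - S_1 M_1 - \sum_{l=2}^{n-1} S_l D_l.$$
The boundary terms are $O_p(n)$ since $S_n = n(\alpha_n - \alpha(\boldsymbol{\theta}_0)) = O_p(\sqrt{n})$ and $M_{n-1} = O_p(\sqrt{n})$ by Theorem \ref{T.FCLT}. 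The remaining sum is a square-integrable martingale (since $S_l \in \mathcal{F}_{l-1}$ and each $D_l$ is an $\mathcal{F}_l$-martingale difference by Lemma \ref{L.MG_cond}), so its variance is $\sum_{l=2}^{n-1} E_{\boldsymbol{\theta}_0}[S_l^2\, g_{ii}(\Phi_{l-1})]$ coordinate-wise. Using $|g_{ii}|_V < \infty$, $\pi(\boldsymbol{\theta}_0) V < \infty$ from Lemma \ref{L.Poisson}, and the Poisson martingale representation \eqref{eq:alphas} to bound $E_{\boldsymbol{\theta}_0}[S_l^2] = O(l)$, the variance is $O(n^2)$ and hence the martingale sum is $O_p(n)$. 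Multiplying by $n^{-3/2}$ makes the entire second term $O_p(n^{-1/2}) = o_p(1)$.

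The main obstacle is the clean bookkeeping for the remainder $\sum_{l=2}^{n-1} S_l D_l$: controlling $E_{\boldsymbol{\theta}_0}[S_l^2 g_{ii}(\Phi_{l-1})]$ uniformly in $l$ requires combining the $V$-geometric ergodicity of Lemma \ref{L.ChainProperties} with the Poisson-equation decomposition of Lemma \ref{L.Poisson}, so that $S_l$ can be written modulo a bounded boundary term as a martingale with increments of bounded conditional variance. A secondary technicality is verifying that the Riemann-sum approximation of $\int_0^1 n^{-1/2} M_{\lfloor ns\rfloor}\,ds$ really does differ from $n^{-3/2}\sum_{l=1}^{n-1} M_l$ by an $o_p(1)$ term, which follows from $\sup_{l \le n} n^{-1/2}|M_l| = O_p(1)$.
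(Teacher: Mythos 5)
Your reduction $Y_n = \frac{1}{n}\sum_{l=1}^{n-1} f(\Phi_l) M_l$ and your treatment of the leading term are fine: $\alpha(\boldsymbol{\theta}_0)\, n^{-3/2}\sum_{l=1}^{n-1} M_l = \alpha(\boldsymbol{\theta}_0)\int_0^1 n^{-1/2}M_{\lfloor ns\rfloor}\,ds$ does converge weakly, via Theorem \ref{T.FCLT} and the continuous mapping theorem, to $\alpha(\boldsymbol{\theta}_0)\int_0^1(1-s)\,dB(s)$, which is the stated limit law. The genuine gap is in the remainder term. Your $L^2$ bound for the martingale $\sum_{l=2}^{n-1} S_l D_l$ needs $E_{\boldsymbol{\theta}_0}[S_l^2\, g_{ii}(\Phi_{l-1})] = O(l)$, and under Assumption \ref{A.Vuniformity} this is a fourth-moment object on the $\sqrt{V}$ scale: $g_{ii}$ is only dominated by $|g_{ii}|_V V$, $f^2$ and $\hat f^2$ only by constant multiples of $V$, and the conditional variance of the Poisson-martingale increments in \eqref{eq:alphas} is $g_{00}(\Phi_{k-1}) \le c\,V(\Phi_{k-1})$ — it is \emph{not} bounded, so the fix you sketch ("increments of bounded conditional variance") is not available. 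After the Poisson decomposition you are left with cross terms of the form $E_{\boldsymbol{\theta}_0}[V(\Phi_j)V(\Phi_{l-1})]$, and the drift condition \eqref{eq:drift} controls only first moments $E_{\boldsymbol{\theta}_0}[V(\Phi_k)]$; nothing in the assumptions prevents these products (hence your martingale's variance, and even its square-integrability) from being infinite. The claim that the remainder is $o_p(1)$ is true — it equals $n^{-1/2}Y_n^*$, and $Y_n^*$ converges weakly by Proposition \ref{P.FCLTgoodestimator} — but it cannot be established by this variance computation under the stated hypotheses.

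This is precisely why the paper argues differently: it writes $n^{-1/2}Y_n$ itself as the discrete stochastic integral $\int_0^{1-\frac1n} X_n(s-)\,dZ_n(s)$ with integrand $X_n(t) = \frac1n\sum_{l=\lfloor nt\rfloor+1}^{n-1} f(\Phi_l)\, I \to \alpha(\boldsymbol{\theta}_0)(1-t)I$ and integrator $Z_n(t)=n^{-1/2}\nabla L_{\lfloor nt\rfloor}(\boldsymbol{\theta}_0)$, and invokes Theorem 2.7 of Kurtz--Protter. The uniform-tightness hypothesis there only requires $\sup_n E_{\boldsymbol{\theta}_0}\bigl[\frac1n\sum_{k} g_{ij}(\Phi_{k-1})\bigr] < \infty$, a first-moment-in-$V$ quantity that Assumption \ref{A.Vuniformity} does supply; no joint moments of the integrand and integrator are needed. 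To repair your route you would either have to strengthen the assumptions (e.g., a drift condition for $V^2$, or boundedness of $g_{ii}$), or handle the remainder by the same stochastic-integral weak-convergence machinery — which is exactly the paper's proof of Proposition \ref{P.FCLTgoodestimator}, at which point the Abel-summation detour buys nothing. Note also that the proposition is stated as convergence in $D([0,1],\mathbb{R}^d)$: the Kurtz--Protter argument delivers the process-level limit, whereas your argument only yields the time-one marginal.
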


\bigskip

As before, we can try to solve the problem of the lack of convergence of $Y_n$ by using a centered estimator of the form
$$Y_n^* \triangleq \frac{1}{n} \sum_{k=1}^{n-1} \sum_{l=k}^{n-1} (f(\Phi_l) - \alpha(\boldsymbol{\theta}_0))D_k.$$
This modification turns out to be the right one, and we obtain the following convergence result for this new estimator.

\bigskip

\begin{prop} \label{P.FCLTgoodestimator}
Suppose that Assumption \ref{A.Vuniformity} is satisfied and define for $0 \leq i,j \leq d$ the functions $g_{ij}$ according to Theorem \ref{T.FCLT}. Then,  
$$Y_n^* \Rightarrow \int_0^1 (B_0(1) - B_0(s)) I d\hat B(s) \qquad n \to \infty$$
in $D([0,1], \mathbb{R}^{d})$, where $B(t) = (B_0(t), B_1(t), \dots, B_d(t))'$ is a $(d+1)$-dimensional mean zero Brownian motion with covariance matrix $\Sigma = (\sigma_{ij})$, where $\sigma_{ij} = \pi(\boldsymbol{\theta}_0) g_{ij}$ for $0\leq i,j \leq d$, $\hat B(t) = (B_1(t), \dots, B_d(t))'$, and $I$ is the identity matrix of $\mathbb{R}^{d\times d}$. 
\end{prop}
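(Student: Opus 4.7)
The plan is to express $Y_n^*$ as a discrete stochastic integral plus a negligible martingale remainder, then invoke the joint functional CLT of Theorem~\ref{T.FCLT} together with a continuity theorem for stochastic integrals to identify the limit. Set $S_k = \sum_{l=0}^{k-1}(f(\Phi_l) - \alpha(\boldsymbol{\theta}_0))$ so that $\sum_{l=k}^{n-1}(f(\Phi_l) - \alpha(\boldsymbol{\theta}_0)) = S_n - S_k$ and $Y_n^* = \frac{1}{n}\sum_{k=1}^{n-1}(S_n - S_k) D_k$. Introduce the c\`adl\`ag scaled processes $X_n(t) = n^{-1/2} S_{\lfloor nt \rfloor}$ and $M_n(t) = n^{-1/2}\nabla L_{\lfloor nt \rfloor}(\boldsymbol{\theta}_0)$, and let $W_n(s) = X_n(1) - X_n(s-)$. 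Using that $M_n$ jumps by $n^{-1/2} D_k$ at $s = k/n$ and that $W_n(k/n-) = n^{-1/2}(S_n - S_{k-1})$, a direct telescoping computation gives
\[
\int_0^1 W_n(s-)\, dM_n(s) = \frac{1}{n}\sum_{k=1}^n (S_n - S_{k-1}) D_k = Y_n^* + R_n,
\]
where $R_n = \frac{1}{n}\sum_{k=1}^n (f(\Phi_{k-1}) - \alpha(\boldsymbol{\theta}_0)) D_k$.

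Next I would show that $R_n \to 0$ in probability. Since $f(\Phi_{k-1}) - \alpha(\boldsymbol{\theta}_0)$ is $\mathcal{F}_{k-1}$-measurable and $\{D_k\}$ is a martingale difference sequence by Lemma~\ref{L.MG_cond}, $R_n$ is coordinatewise a sum of orthogonal martingale increments scaled by $1/n$, so that
\[
E_{\boldsymbol{\theta}_0}|R_n^{(i)}|^2 = \frac{1}{n^2}\sum_{k=1}^n E_{\boldsymbol{\theta}_0}\bigl[(f(\Phi_{k-1}) - \alpha(\boldsymbol{\theta}_0))^2 g_{ii}(\Phi_{k-1})\bigr].
\]
The bounds $|f|_{\sqrt V} < \infty$ and $|g_{ii}|_V < \infty$ together with the uniform-in-$k$ control on $E_{\boldsymbol{\theta}_0}V(\Phi_{k-1})$ supplied by the drift inequality of Assumption~\ref{A.Vuniformity}(iv) can then be used, in the same way as in the proof of Theorem~\ref{T.FCLT}, to conclude that $E_{\boldsymbol{\theta}_0}|R_n^{(i)}|^2 \to 0$.

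Finally, Theorem~\ref{T.FCLT} yields $(X_n, M_n) \Rightarrow (B_0, \hat B)$ in $D([0,1], \mathbb{R}^{d+1})$, and since $B_0$ is almost surely continuous the continuous mapping theorem produces $(W_n, M_n) \Rightarrow (B_0(1) - B_0(\cdot), \hat B)$. The predictable quadratic covariations $\langle M_n^{(i)}, M_n^{(j)}\rangle_t = n^{-1}\sum_{k=1}^{\lfloor nt \rfloor} g_{ij}(\Phi_{k-1})$ converge uniformly in $t$ to the deterministic limit $t\,\pi(\boldsymbol{\theta}_0) g_{ij}$ by $V$-ergodicity and $|g_{ij}|_V < \infty$, which provides the uniform tightness (equivalently, the good-sequence condition of Kurtz and Protter) required to pass to the limit in the stochastic integral:
\[
\int_0^1 W_n(s-)\, dM_n(s) \Rightarrow \int_0^1 (B_0(1) - B_0(s))\, d\hat B(s).
\]
Combining with $R_n \to 0$ and noting that $I\, d\hat B(s) = d\hat B(s)$ yields the claim.

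The main obstacle is the final passage to the limit in the stochastic integral, which requires a Kurtz-Protter-type continuity theorem and verification of its hypotheses (uniform tightness of $\{M_n\}$) through the quadratic-variation estimates above; the rest of the argument is essentially bookkeeping around the telescoping identity and a routine $L^2$ bound for the remainder $R_n$.
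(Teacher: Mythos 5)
Your proposal follows essentially the same route as the paper: rewrite $Y_n^*$ as a discrete stochastic integral of the (time-reversed) partial-sum process against the score martingale $n^{-1/2}\nabla L_{\lfloor n\cdot\rfloor}(\boldsymbol{\theta}_0)$, obtain joint convergence of integrand and integrator from Theorem~\ref{T.FCLT} and the continuous mapping theorem, and pass to the limit via Theorem 2.7 of \cite{Kurtz_Protter_91}, with the good-sequence/uniform-tightness hypothesis checked through the same quadratic-variation estimates as in Proposition~\ref{P.CLTbadestimator2}. Your bookkeeping of the Riemann sum is in fact more careful than the paper's displayed identity: the exact telescoping does produce the diagonal term $R_n=\frac{1}{n}\sum_{k=1}^{n}(f(\Phi_{k-1})-\alpha(\boldsymbol{\theta}_0))D_k$, so isolating it and proving it negligible is the right move.

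The genuine gap is the $L^2$ argument for $R_n$. Orthogonality does give $E_{\boldsymbol{\theta}_0}|R_n^{(i)}|^2=n^{-2}\sum_{k\leq n}E_{\boldsymbol{\theta}_0}\bigl[(f(\Phi_{k-1})-\alpha(\boldsymbol{\theta}_0))^2 g_{ii}(\Phi_{k-1})\bigr]$, but the only pointwise bounds supplied by Assumption~\ref{A.Vuniformity} are $(f-\alpha(\boldsymbol{\theta}_0))^2\leq c\,V$ and $g_{ii}\leq |g_{ii}|_V\, V$, so each summand is controlled only by $E_{\boldsymbol{\theta}_0}[V(\Phi_{k-1})^2]$; the drift inequality (iv) bounds $E_{\boldsymbol{\theta}_0}[V(\Phi_k)]$, not its square, and neither $\sup_k E_{\boldsymbol{\theta}_0}[V(\Phi_k)^2]<\infty$ nor $\pi(\boldsymbol{\theta}_0)V^2<\infty$ follows from the assumptions, so these expectations may be infinite and the claimed $L^2$ convergence is unjustified. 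The step is repairable: $nR_n^{(i)}$ is a martingale whose predictable bracket is at most $C\sum_{k\leq n}V(\Phi_{k-1})^2\leq C\bigl(\max_{k\leq n}V(\Phi_{k-1})\bigr)\sum_{k\leq n}V(\Phi_{k-1})$, and since $n^{-1}\max_{k\leq n}V(\Phi_{k-1})\to 0$ a.s.\ (Theorem 17.3.3 of \cite{Meyn_Tweedie}, already invoked in the proof of Theorem~\ref{T.FCLT}) while $n^{-1}\sum_{k\leq n}V(\Phi_{k-1})\to\pi(\boldsymbol{\theta}_0)V<\infty$ a.s., the bracket is $o(n^2)$ a.s.\ and Lenglart's inequality yields $R_n\to 0$ in probability, which suffices by Slutsky. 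Alternatively, you can avoid $R_n$ altogether (and also the fact that your integrand $W_n$ is not $\mathcal{G}_{n,t}$-adapted, which a literal application of Kurtz--Protter requires; the paper's own integrand shares this defect) by writing $Y_n^* = (n^{-1/2}S_n)\,(n^{-1/2}\nabla L_{n-1}(\boldsymbol{\theta}_0)) - \frac{1}{n}\sum_{k=1}^{n-1}S_k D_k$: since $S_k$ is $\mathcal{F}_{k-1}$-measurable, the second sum is an adapted discrete stochastic integral to which Kurtz--Protter applies directly, the first term converges by Theorem~\ref{T.FCLT} and continuous mapping, and the two limits recombine into $B_0(1)\hat B(1)-\int_0^1 B_0(s)\,d\hat B(s)=\int_0^1(B_0(1)-B_0(s))\,d\hat B(s)$, exactly the decomposition used in Lemma~\ref{L.IntegralCov}.
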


\begin{proof}
Let $S_n(t) = \lfloor nt \rfloor (\alpha_{\lfloor nt \rfloor} -  \alpha(\boldsymbol{\theta}_0) ) = \sum_{j=0}^{\lfloor nt \rfloor-1} (f(\Phi_j) - \alpha(\boldsymbol{\theta}_0))$, and note that by Theorem~\ref{T.FCLT} we have
$$n^{-1/2} \left( S_n , \,  \nabla L_{\lfloor n \cdot \rfloor}(\boldsymbol{\theta}_0)' \right) \Rightarrow B' \qquad n \to \infty,$$
in $D([0,1], \mathbb{R}^{d+1})$, where $B(t) = (B_0(t), B_1(t), \dots, B_d(t))'$ is a $(d+1)$-dimensional mean zero Brownian motion with covariance matrix $\Sigma$. Now define the process $\hat W_n(t) = \sum_{j=\lfloor nt \rfloor}^{n-1} (f(\Phi_j) - \alpha(\boldsymbol{\theta}_0))$ with the convention that $\hat W_n(1) \equiv 0$. It follows that $\hat W_n(t) = S_n(1) - S_n(t)$ and the continuous mapping theorem gives
\begin{equation} \label{eq:jointConv}
n^{-1/2} \left(\hat W_n(\cdot), \nabla L_{\lfloor n\cdot \rfloor}(\boldsymbol{\theta}_0)'\right) \Rightarrow (B_0(1) - B_0(\cdot), B_1(\cdot), \dots, B_d(\cdot)) \qquad n \to \infty
\end{equation}
in $D([0,1], \mathbb{R}^{d+1})$. 

Next, define the processes $X_n(t) = n^{-1/2} \hat W_n(t) I$, $X(t) = (B_0(1)-B_0(t))I$, $Z_n(t) = n^{-1/2} \nabla L_{\lfloor nt \rfloor}(\boldsymbol{\theta}_0)$, and $Z(t) = (B_1(t), \dots, B_d(t))'$. Let $ \mathcal{G}_{n,t}= \mathcal{F}_{\lfloor nt \rfloor}$. Clearly, $\{X_n(t): t \in [0,1]\}$ and $\{ Z_n(t): t \in [0,1]\}$ are $\{ \mathcal{G}_{n,t} \}$-adapted and $Z_n(t)$ is a  $\{ \mathcal{G}_{n,t}\}-$martingale. Also, for $t_i = i/n$, 
$$\hat Y_n = \frac{1}{n} \sum_{k=1}^{n-1}\sum_{l=k}^{n-1} (f(\Phi_l)-\alpha(\boldsymbol{\theta}_0)) D_k =  \sum_{k=1}^{n-1} X_n(t_{k-1}) (Z_n(t_k) - Z_n(t_{k-1})) = \int_0^{1-\frac{1}{n}} X_n(s-) \, d Z_n(s).$$
The same steps used in the proof of Proposition \ref{P.CLTbadestimator2} show that the conditions of Theorem 2.7 of \cite{Kurtz_Protter_91} are satisfied, and we obtain that
$$\left(X_n, Z_n, \int_0^{1-\frac{1}{n}} X_n dZ_n\right) \Rightarrow \left(X, Z, \int_0^1X dZ\right) \qquad n \to \infty$$
in $D([0,1], \mathbb{R}^{d\times d} \times \mathbb{R}^d \times \mathbb{R}^d)$. 
\end{proof}

It follows that $Y_n^*$ is a suitable estimator for $E_{\boldsymbol{\theta}_0} \left[ \alpha_n \nabla L_n(\boldsymbol{\theta}_0) \right]$. It remains to compare $Y_n^*$ to $Y(C_n^*)$ from Section \ref{S.FirstEstimator}.

\section{Computation of the asymptotic variance} \label{S.Remarks}

The two previous sections provide details on two potential estimators for $E_{\boldsymbol{\theta}_0} \left[ \alpha_n \nabla L_n(\boldsymbol{\theta}_0) \right]$, namely,
$$Y(C_n^*) = (\alpha_n-\alpha(\boldsymbol{\theta}_0) ) \nabla L_n(\boldsymbol{\theta}_0)$$
and
$$Y_n^* = \frac{1}{n} \sum_{k=1}^{n-1} \sum_{j=k}^{n-1} (f(\Phi_j) - \alpha(\boldsymbol{\theta}_0)) D_k,$$
where $D_k = \nabla p(\boldsymbol{\theta}_0, \Phi_{k-1}, \Phi_k)$. Both of these estimators have the property, under Assumption \ref{A.Vuniformity}, that their expectation converges to $\nabla \alpha(\boldsymbol{\theta}_0)$, i.e.,
$$E_{\boldsymbol{\theta}_0} \left[ Y(C_n^*) \right] \to \nabla \alpha(\boldsymbol{\theta}_0) \qquad \text{and} \qquad E_{\boldsymbol{\theta}_0} \left[ Y_n^* \right] \to \nabla \alpha(\boldsymbol{\theta}_0),$$
as $n \to \infty$ (Theorem \ref{T.Convergence}), and unlike the estimators given in \eqref{eq:FirstBadEstimator} and \eqref{eq:est2}, they converge to a proper limiting distribution (Propositions \ref{P.CLTgoodestimator} and \ref{P.FCLTgoodestimator}). For comparison purposes we compute in this section the variance of these limiting distributions.

First, by Proposition~\ref{P.CLTgoodestimator} we have $Y(C_n^*) \Rightarrow Z_0 \hat Z$, where $Z = (Z_0, Z_1, \dots, Z_d)'$ is a $(d+1)$-dimensional multivariate normal with covariance matrix $\Sigma$ and $\hat Z = (Z_1, \dots, Z_d)'$. Therefore, by Isserlis' theorem, the $(i,j)$th component, $1 \leq i,j \leq d$, of the limiting distribution's covariance matrix is given by 
\begin{align}
\cov_{\boldsymbol{\theta}_0}( Z_0 \hat Z )_{ij} &= E_{\boldsymbol{\theta}_0} [ Z_0^2 Z_i Z_j] - E_{\boldsymbol{\theta}_0} [Z_0 Z_i] E_{\boldsymbol{\theta}_0} [Z_0 Z_j] \notag \\
&= (\sigma_{00} \sigma_{ij} + 2 \sigma_{0i} \sigma_{0j}) - \sigma_{0i} \sigma_{0j} \notag \\
&= \sigma_{00} \sigma_{ij} + \sigma_{0i} \sigma_{0j}. \label{eq:FirstCov}
\end{align}

Similarly, by Proposition \ref{P.FCLTgoodestimator} we have $Y_n^* \Rightarrow \int_0^1 (B_0(1)-B_0(s)) I d \hat B(s)$ in $D([0,1], \mathbb{R}^{d+1})$, where $B(t) = (B_0(t), B_1(t), \dots, B_d(t))'$ is a $(d+1)$-dimensional Brownian motion with covariance matrix $\Sigma$ and $\hat B(t) = (B_1(t), \dots, B_d(t))'$. Since the calculation of the covariance of the limiting distribution in this case is somewhat lengthier, we state the result in the following lemma and postpone the proof to Section \ref{S.Proofs}.

\begin{lemma} \label{L.IntegralCov}
Let $B(t) = (B_0(t), B_1(t), \dots, B_d(t))'$ be a $(d+1)$-dimensional Brownian motion with covariance matrix $\Sigma$. Let $\mathcal{I} = \int_0^1 (B_0(1)-B_0(s)) I d \hat B(s)$, where $I$ is the $\mathbb{R}^{d \times d}$ identity matrix and $\hat B(t) = (B_1(t), \dots, B_d(t))'$. Then, the $(i,j)$th component of the covariance matrix of $\mathcal{I}$ is given by
\begin{equation} \label{eq:SecondCov}
\text{\rm Cov}_{\boldsymbol{\theta}_0}( \mathcal{I})_{ij} = \frac{\sigma_{00} \sigma_{ij}}{2}.
\end{equation}
\end{lemma}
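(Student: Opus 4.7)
The plan is to reduce the anticipating integral $\mathcal{I}$ to an ordinary adapted It\^o integral plus a deterministic correction, and then apply the It\^o isometry. Since $I$ is the $d \times d$ identity, the $i$th component of $\mathcal{I}$ is the scalar $\mathcal{I}_i = \int_0^1 (B_0(1) - B_0(s))\, dB_i(s)$, and by the construction in the proof of Proposition~\ref{P.FCLTgoodestimator} it is the Kurtz--Protter limit of the left-endpoint Riemann sums $\sum_k (B_0(1) - B_0(t_{k-1}))(B_i(t_k) - B_i(t_{k-1}))$ along any partition $0 = t_0 < \cdots < t_n = 1$. Because $B_0(1)$ does not depend on $k$, the pathwise algebraic splitting
\[
\sum_k (B_0(1) - B_0(t_{k-1}))(B_i(t_k) - B_i(t_{k-1})) = B_0(1) B_i(1) - \sum_k B_0(t_{k-1})(B_i(t_k) - B_i(t_{k-1}))
\]
holds, and passing to the limit yields the representation
\[
\mathcal{I}_i = B_0(1) B_i(1) - \int_0^1 B_0(s)\, dB_i(s),
\]
in which the surviving integral is now a classical It\^o integral with adapted integrand.

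Next, I would apply It\^o's product rule to $B_0(t) B_i(t)$. Since $B_0$ and $B_i$ are two (possibly correlated) components of the multivariate Brownian motion $B$, their cross-variation is $\langle B_0, B_i \rangle_t = \sigma_{0i}\, t$, so
\[
B_0(1) B_i(1) = \int_0^1 B_0(s)\, dB_i(s) + \int_0^1 B_i(s)\, dB_0(s) + \sigma_{0i}.
\]
Substituting into the previous identity gives the clean representation
\[
\mathcal{I}_i = \int_0^1 B_i(s)\, dB_0(s) + \sigma_{0i},
\]
which exhibits $\mathcal{I}_i$ as a mean-zero adapted It\^o integral plus the deterministic shift $\sigma_{0i}$; in particular $E[\mathcal{I}_i] = \sigma_{0i}$.

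Finally, since after this reduction all the stochastic parts are integrals against the single scalar Brownian motion $B_0$ (whose quadratic variation has rate $\sigma_{00}$), I would invoke the It\^o isometry together with $E[B_i(s) B_j(s)] = \sigma_{ij}\, s$ to obtain
\[
\cov_{\boldsymbol{\theta}_0}(\mathcal{I}_i, \mathcal{I}_j) = E\!\left[\int_0^1 B_i(s)\, dB_0(s) \int_0^1 B_j(s)\, dB_0(s) \right] = \sigma_{00} \int_0^1 \sigma_{ij}\, s\, ds = \frac{\sigma_{00}\, \sigma_{ij}}{2},
\]
matching \eqref{eq:SecondCov}. The only real obstacle is the first step: rigorously rewriting the anticipating integral as an ordinary It\^o integral via the telescoping identity and passage to the limit. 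Once that is in place, It\^o's product rule and the isometry handle the rest, so the main content of the lemma is this algebraic reduction rather than any hard stochastic-calculus estimate.
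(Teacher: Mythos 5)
Your argument is correct, and it takes a genuinely shorter route than the paper's. Both proofs begin with the same reduction, $\mathcal{I} = B_0(1)\hat B(1) - \int_0^1 B_0(s)\, I\, d\hat B(s)$ (you justify it via the telescoping of the left-endpoint sums appearing in the Kurtz--Protter limit, which is exactly the interpretation the paper uses, and the left-point sums for $\int_0^1 B_0\,dB_i$ do converge in probability to the It\^o integral). From there the paper computes $\mathrm{Cov}(U-V)$ term by term, with $U=B_0(1)\hat B(1)$ and $V=\int_0^1 B_0\, I\, d\hat B$: Isserlis' theorem for $\mathrm{Cov}(U)$, and a reduction to a standard Brownian motion via $\Sigma = CC'$ to evaluate $E[VV']$ and the mixed fourth moments in $E[UV']$, after which the terms $\sigma_{0i}\sigma_{0j}$ cancel. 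You instead apply It\^o's product formula pathwise, using $\langle B_0,B_i\rangle_t = \sigma_{0i}t$, to cancel $\int_0^1 B_0\,dB_i$ and obtain the clean representation $\mathcal{I}_i = \int_0^1 B_i(s)\,dB_0(s) + \sigma_{0i}$; a single polarized It\^o isometry, $E\bigl[\int_0^1 H\,dB_0 \int_0^1 K\,dB_0\bigr] = \sigma_{00}\int_0^1 E[H_sK_s]\,ds$, then gives the covariance directly. This buys a much shorter computation (no Isserlis, no Cholesky factorization, no mixed fourth-moment bookkeeping) and yields the mean $E[\mathcal{I}_i]=\sigma_{0i}$ as a byproduct, consistent with the vector $v$ used later in Section 5; the only point worth stating explicitly is that the polarized isometry holds for adapted square-integrable integrands even when they are correlated with the integrator $B_0$, which is standard.
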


To simplify the notation let 
$$A = \cov_{\boldsymbol{\theta}_0}(Z_0 \hat Z) \qquad \text{and} \qquad B = \cov_{\boldsymbol{\theta}_0} (\mathcal{I})$$
denote the asymptotic covariances of $Y(C_n^*)$ and $Y_n^*$, respectively.  Next define $v = (\sigma_{01}, \sigma_{02}, \dots, \sigma_{0d})'$ and note that \eqref{eq:FirstCov} and \eqref{eq:SecondCov} give
$$A = 2 B + v v'.$$
We now compare the generalized variances of the two estimators, that is, the determinants of their covariance matrices. Provided $B$ is positive definite we obtain
\begin{align*}
\det(A) &= \det(2B + v v') \\
&= \det(2B) \det \left(I + \frac{1}{2} B^{-1} v v' \right) \\
&= \det(2B) \left(1 + \frac{1}{2} v' B^{-1} v \right) \qquad \text{(by Sylvester's determinant theorem)} \\
&= 2^d \det(B) \left(1 + \frac{1}{2} v' B^{-1} v \right),
\end{align*}
where $I$ is the $\mathbb{R}^{d\times d}$ identity matrix. Since $B$ is positive definite, so is $B^{-1}$, and therefore $v' B^{-1} v \geq 0$. We conclude that
$$ \det(A) \geq 2^d \det(B),$$
which suggests that $Y_n^*$ is a better estimator for $\nabla \alpha(\boldsymbol{\theta}_0)$ than $Y(C_n^*)$.

\section{Proofs} \label{S.Proofs}

This last section of the paper contains all the proofs that were not given in the prior sections. The first one corresponds to the martingale properties of $\nabla L_n(\boldsymbol{\theta}_0)$. 

\begin{proof}[Proof of Lemma \ref{L.MG_cond}]
We start by noting that for any $\boldsymbol{\theta} \in \boldsymbol{\Theta}$ and $1 \leq i \leq d$, 
\begin{align*}
\frac{\partial}{\partial \theta_i} \, L_n(\boldsymbol{\theta}) &= \frac{\partial}{\partial \theta_i} \prod_{j=1}^n p(\boldsymbol{\theta}, \Phi_{j-1}, \Phi_j) = \sum_{j=1}^n \frac{L_n(\boldsymbol{\theta})}{p(\boldsymbol{\theta}, \Phi_{j-1}, \Phi_j)}  \cdot \frac{\partial}{\partial \theta_i} p(\boldsymbol{\theta}, \Phi_{j-1}, \Phi_j)  \\
&= L_n(\boldsymbol{\theta}) \sum_{j=1}^n \frac{\partial}{\partial \theta_i} \log p(\boldsymbol{\theta}, \Phi_{j-1}, \Phi_j) .
\end{align*}
Since $L_n(\boldsymbol{\theta}_0) \equiv 1$, it follows that
$$\nabla L_n(\boldsymbol{\theta}_0) = \sum_{j=1}^n D_j.$$

Next, note that for any fixed $x \in {\sf X}$ we have
\begin{align*}
&\left|  \left. \frac{\partial}{\partial \theta_i} \int_{{\sf X}} p(\boldsymbol{\theta}, x, y) P(\boldsymbol{\theta}_0, x, dy) \right|_{\boldsymbol{\theta} = \boldsymbol{\theta}_0} - \int_{{\sf X}}  \frac{\partial}{\partial \theta_i}  p(\boldsymbol{\theta}_0, x, y) P(\boldsymbol{\theta}_0, x,dy) \right| \\
&= \lim_{h \to 0} \left| \int_{{\sf X}} \left( \frac{p(\boldsymbol{\theta}_0 + h{\bf e}_i, x, y) - p(\boldsymbol{\theta}_0, x, y)}{h}  -\frac{\partial}{\partial \theta_i} p(\boldsymbol{\theta}_0, x, y) \right) P(\boldsymbol{\theta}_0, x, dy) \right| \\
&\leq \lim_{h \to 0} \sup_{g:|g| \leq V} \left| \int_{{\sf X}} g(y) \left( \frac{p(\boldsymbol{\theta}_0 + h{\bf e}_i, x, y) - p(\boldsymbol{\theta}_0, x, y)}{h}  -\frac{\partial}{\partial \theta_i} p(\boldsymbol{\theta}_0, x, y) \right) P(\boldsymbol{\theta}_0, x, dy) \right| \\
&\leq V(x) \lim_{h \to 0}  \vertiii{ \frac{P(\boldsymbol{\theta}_0 + h {\bf e}_i) - P(\boldsymbol{\theta}_0)}{h} - P^{(i)}(\boldsymbol{\theta}_0) }_V.
\end{align*}
Therefore,
$$ \left. \frac{\partial}{\partial \theta_i} \int_{{\sf X}} p(\boldsymbol{\theta}, x, y) P(\boldsymbol{\theta}_0, x, dy) \right|_{\boldsymbol{\theta} = \boldsymbol{\theta}_0} = \int_{{\sf X}}  \frac{\partial}{\partial \theta_i}  p(\boldsymbol{\theta}_0, x, y) P(\boldsymbol{\theta}_0, x,dy)$$
for all $x \in {\sf X}$. It follows that
\begin{align*}
E_{\boldsymbol{\theta}_0}[D_k^i | \mathcal{F}_{k-1}] &= E_{\boldsymbol{\theta}_0} \left[E_{\boldsymbol{\theta}_0}\left[  \left.  \frac{\partial}{\partial \theta_i} p(\boldsymbol{\theta}_0, \Phi_{k-1}, \Phi_k) \right| \Phi_{k-1} \right] \right] \\
&= E_{\boldsymbol{\theta}_0} \left[ \int_{{\sf X}}  \frac{\partial}{\partial \theta_i}  p(\boldsymbol{\theta}_0, \Phi_{k-1}, y) P(\boldsymbol{\theta}_0, \Phi_{k-1},dy)  \right] \\
&= E_{\boldsymbol{\theta}_0} \left[  \left. \frac{\partial}{\partial \theta_i} \int_{{\sf X}} p(\boldsymbol{\theta}, \Phi_{k-1}, y) P(\boldsymbol{\theta}_0, \Phi_{k-1}, dy) \right|_{\boldsymbol{\theta} = \boldsymbol{\theta}_0} \right]   \\
&= 0 \hspace{20mm}  \text{(since the integral is equal to one for all $\boldsymbol{\theta}$)},
\end{align*}
which establishes that $M_n \triangleq \nabla L_n(\boldsymbol{\theta}_0)$ is a martingale. To see that it is square integrable let $M_n = (M_n^1, \dots, M_n^d)'$ and note that $E_{\boldsymbol{\theta}_0,x}[(M_n^i)^2] = \sum_{k=1}^n E_{\boldsymbol{\theta}_0}[(D^i_k)^2]$, and
\begin{align*}
E_{\boldsymbol{\theta}_0}[(D_k^i)^2] &= E_{\boldsymbol{\theta}_0}[E_{\boldsymbol{\theta}_0}[(D_k^i)^2 | \mathcal{F}_{k-1}]] \\
&= E_{\boldsymbol{\theta}_0} \left[ \int_{{\sf X}} \left(\frac{\partial}{\partial \theta_i} p(\boldsymbol{\theta}_0, \Phi_{k-1}, y)\right)^2 P(\boldsymbol{\theta}_0, \Phi_{k-1}, dy) \right]  \\
&= E_{\boldsymbol{\theta}_0} [g_{ii}(\Phi_{k-1})], 
\end{align*}
which is finite since $|g_{ii}|_V < \infty$ by Assumption \ref{A.Vuniformity}(vi) and $E_{\boldsymbol{\theta}_0} [V(\Phi_{k-1})] < \infty$. 
\end{proof}

\bigskip

The next proof corresponds to the martingale constructed using the solution to Poisson's equation.

\begin{proof}[Proof of Lemma~\ref{L.Poisson}]
We start by pointing out that by Lemma~\ref{L.ChainProperties}, the chain $\boldsymbol{\Phi}$ is $V$-ergodic for each $\boldsymbol{\theta} \in B_\epsilon(\boldsymbol{\theta}_0)$, and therefore, $\pi(\boldsymbol{\theta}_0) V < \infty$. Also, by Assumption~\ref{A.Vuniformity}(vii), we have that $\pi(\boldsymbol{\theta}_0) f^2 < \infty$.

We now proceed to show the existence of a solution $\hat f$ to Poisson's equation. To this end, note that by Jensen's inequality and Assumption~\ref{A.Vuniformity}(iv) we have
\begin{equation} \label{eq:UseJensen}
P(\boldsymbol{\theta}_0) \sqrt{V(x)} \leq \sqrt{ P(\boldsymbol{\theta}_0) V(x) } \leq  \sqrt{ \lambda V(x) + b 1_K(x)} \leq \sqrt{\lambda V(x)} + \sqrt{b}1_K(x).
\end{equation}
Next, define $\tilde V(x) = (1- \sqrt{\lambda})^{-1} (1 \vee \kappa) \sqrt{V(x)}$, where $\kappa = |f|_{\sqrt{V}}$ and $x \vee y = \max\{ x, y\}$. Using \eqref{eq:UseJensen}  we obtain
\begin{align*}
P(\boldsymbol{\theta}_0) \tilde V(x) &= (1-\sqrt{\lambda})^{-1} (1 \vee \kappa) P(\boldsymbol{\theta}_0) \sqrt{V(x)} \\
&\leq (1-\sqrt{\lambda})^{-1} (1 \vee \kappa) \left( \sqrt{\lambda V(x) }+ \sqrt{b} 1_K(x)   \right)  \\
&= \sqrt{\lambda} \tilde V(x) + (1-\sqrt{\lambda})^{-1} (1 \vee \kappa) \sqrt{b} 1_K(x) \\
&= \tilde V(x) - (1 \vee \kappa) \sqrt{V(x)} + (1-\sqrt{\lambda})^{-1} (1 \vee \kappa) \sqrt{b} 1_K(x).
\end{align*}
It follows that condition (V3) in \cite{Meyn_Tweedie} (see equation (14.16) in \cite{Meyn_Tweedie} or equation (8) in \cite{Glynn_Meyn_96}) is satisfied with $\tilde V$ everywhere finite, $(1 \vee \kappa) \sqrt{V} \geq 1$, and $K$ a small set (hence $K$ petite). Moreover, by Jensen's inequality, 
$$\pi(\boldsymbol{\theta}_0) \tilde V = (1- \sqrt{\lambda})^{-1} (1 \vee \kappa) \pi(\boldsymbol{\theta}_0) \sqrt{V} \leq (1- \sqrt{\lambda})^{-1} (1 \vee \kappa) \sqrt{\pi(\boldsymbol{\theta}_0) V} < \infty.$$
Then, since $|f| \leq (1 \vee \kappa) \sqrt{V}$, Theorem 2.3 in \cite{Glynn_Meyn_96} (Theorem 17.4.2 in \cite{Meyn_Tweedie}) ensures that there exists a solution $\hat f$ to Poisson's equation satisfying $|\hat f| \leq c_0 (\tilde V + 1)$ for some constant $c_0 < \infty$.   This last inequality also implies that $\pi(\boldsymbol{\theta}_0) \hat f^2 < \infty$. Choose $c_1 = 2 c_0 (1-\sqrt{\lambda})^{-1} (1 \vee \kappa)$ to obtain the statement of the lemma.

It remains to show that $Z_n$ is a square-integrable martingale. Clearly,
$$E_{\boldsymbol{\theta}_0} \left[ \hat f(\Phi_k) - P(\boldsymbol{\theta}_0) \hat f (\Phi_{k-1})  \right] = E_{\boldsymbol{\theta}_0} \left[ E_{\boldsymbol{\theta}_0} \left[  \left. \hat f(\Phi_k) \right| \mathcal{F}_{k-1} \right] - P(\boldsymbol{\theta}_0) \hat f (\Phi_{k-1})  \right] = 0,$$
so $Z_n$ is a martingale. To see that it is square-integrable note that
\begin{align*}
E_{\boldsymbol{\theta}_0} \left[ \left( \hat f(\Phi_k) - P(\boldsymbol{\theta}_0) \hat f (\Phi_{k-1})  \right)^2 \right] &= E_{\boldsymbol{\theta}_0} \left[ \left( f(\Phi_k) - \pi(\boldsymbol{\theta}_0)  f   \right)^2 \right] \\
&\leq E_{\boldsymbol{\theta}_0} \left[  f(\Phi_k)^2 \right] +  E_{\boldsymbol{\theta}_0} \left[  |f(\Phi_k)| \right] \pi(\boldsymbol{\theta}_0) f + \left( \pi(\boldsymbol{\theta}_0) f \right)^2.
\end{align*}
Since $|f|_{\sqrt{V}} < \infty$ and both $E_{\boldsymbol{\theta}_0} \left[  V(\Phi_k) \right] < \infty$ and $\pi(\boldsymbol{\theta}_0) V < \infty$, then the above expression is finite, which completes the proof. 
\end{proof}

\bigskip

Next, we give the proof of Theorem \ref{T.Convergence}, which states that under Assumption~\ref{A.Vuniformity} the expectation of $\alpha_n \nabla L_n(\boldsymbol{\theta}_0)$ converges to $\nabla \alpha(\boldsymbol{\theta}_0)$.

\begin{proof}[Proof of Theorem \ref{T.Convergence}]
Define $M_n^i = \sum_{j=1}^n D_j^i$, $1 \leq i \leq d$ as in Lemma \ref{L.MG_cond}, and 
$$Z_n = \sum_{k=1}^n \left( \hat f(\Phi_k) - P(\boldsymbol{\theta}_0) \hat f(\Phi_{k-1})\right), \qquad \xi_k = Z_k - Z_{k-1},$$ as in Lemma \ref{L.Poisson}. By those same lemmas we have that $M_n^i$ and $Z_n$ are square-integrable martingales. 

Next, note that
\begin{align*}
E_{\boldsymbol{\theta}_0} \left[\alpha_n \nabla L_n(\boldsymbol{\theta}_0) \right]  &= E_{\boldsymbol{\theta}_0,x}[\alpha_n M_n^i] \\
&= E_{\boldsymbol{\theta}_0}[(\alpha_n - \alpha(\boldsymbol{\theta}_0)) M_n^i] \\
&= \frac{1}{n} E_{\boldsymbol{\theta}_0}[\hat f(\Phi_0) M_n^i] - \frac{1}{n} E_{\boldsymbol{\theta}_0}[\hat f(\Phi_n) M_n^i] +  \frac{1}{n} \sum_{k=1}^{n} E_{\boldsymbol{\theta}_0}[\xi_k M_n^i] \\
&=\frac{1}{n} \sum_{j=1}^n E_{\boldsymbol{\theta}_0}[\hat f(\Phi_0)  D_j^i] - \frac{1}{n}  E_{\boldsymbol{\theta}_0}[\hat f(\Phi_n)  M_n^i] +  \frac{1}{n} \sum_{k=1}^{n} \sum_{j=1}^n E_{\boldsymbol{\theta}_0}[\xi_k  D_j^i]  \\
&=   - \frac{1}{n} E_{\boldsymbol{\theta}_0}[\hat f(\Phi_n) M_n^i] + \frac{1}{n} \sum_{k=1}^{n} \sum_{j=1}^k E_{\boldsymbol{\theta}_0}[\xi_k D_j^i] \\
&= - \frac{1}{n} E_{\boldsymbol{\theta}_0}[\hat f(\Phi_n) M_n^i] + \frac{1}{n} \sum_{k=1}^{n} E_{\boldsymbol{\theta}_0}[\xi_k D_k^i].
\end{align*} 
To show that $\frac{1}{n} \left| E_{\boldsymbol{\theta}_0}[ \hat f(\Phi_n) M_n^i] \right|  \to 0$ as $n \to \infty$, note that by the Cauchy-Schwarz inequality
\begin{align*}
\frac{1}{n} \left| E_{\boldsymbol{\theta}_0}[ \hat f(\Phi_n) M_n^i] \right|  &\leq \frac{1}{n} \left( E_{\boldsymbol{\theta}_0}[ \hat f(\Phi_n)^2]  \right)^{1/2} \left( E_{\boldsymbol{\theta}_0}[ (M_n^i)^2 ]\right)^{1/2} \\
&= \left( \frac{1}{n} E_{\boldsymbol{\theta}_0}[ \hat f(\Phi_n)^2]  \right)^{1/2} \left( \frac{1}{n} \sum_{j=1}^n E_{\boldsymbol{\theta}_0}[ (D_j^i)^2 ]\right)^{1/2}.
\end{align*}
Also, by Lemma \ref{L.Poisson} we have $\hat f^2 \leq c_1 V$, and since by Lemma \ref{L.ChainProperties} $\boldsymbol{\Phi}$ is $V$-ergodic, we obtain that $E_{\boldsymbol{\theta}_0}[ \hat f(\Phi_n)^2] \to \pi(\boldsymbol{\theta}_0) \hat f^2 < \infty$ as $n \to \infty$. This in turn implies that $\frac{1}{n} E_{\boldsymbol{\theta}_0}[ \hat f(\Phi_n)^2] \to 0$ as $n \to \infty$. For the other term we have by Assumption \ref{A.Vuniformity}(vi) that $|g_{ii}|_V < \infty$, and therefore $E_{\boldsymbol{\theta}_0}[ g_{ii}(\Phi_n)] \to \pi(\boldsymbol{\theta}_0) g_{ii} < \infty$. Hence,
$$\lim_{n \to \infty} \frac{1}{n} \sum_{j=1}^n E_{\boldsymbol{\theta}_0} [ (D_j^i)^2] = \lim_{n \to \infty} \frac{1}{n} \sum_{j=1}^n E_{\boldsymbol{\theta}_0} [ g_{ii}(\Phi_{j-1}) ] = \pi(\boldsymbol{\theta}_0) g_{ii}.$$
We conclude that $\frac{1}{n} \left| E_{\boldsymbol{\theta}_0}[ \hat f(X_n) M_n^i] \right|  \to 0$ as $n \to \infty$. 

To show that $\frac{1}{n} \sum_{k=1}^{n} E_{\boldsymbol{\theta}_0}[\xi_k D_k^i] \to \frac{\partial}{\partial \theta_i} \alpha (\boldsymbol{\theta}_0)$ note that
\begin{align*}
E_{\boldsymbol{\theta}_0}[\xi_k D_k] &= E_{\boldsymbol{\theta}_0}[E_{\boldsymbol{\theta}_0}[\xi_k D_k| \mathcal{F}_{k-1}]] \\
&= E_{\boldsymbol{\theta}_0}\left[ \int_{{\sf X}} \left( \hat f(y) - P(\boldsymbol{\theta}_0) \hat f(\Phi_{k-1}) \right) \frac{\partial}{\partial \theta_i} p(\boldsymbol{\theta}_0, \Phi_{k-1}, y) P(\boldsymbol{\theta}_0, \Phi_{k-1}, dy)    \right] \\
&= E_{\boldsymbol{\theta}_0}\left[ \int_{{\sf X}} \hat f(y) \frac{\partial}{\partial \theta_i} p(\boldsymbol{\theta}_0, \Phi_{k-1}, y) P(\boldsymbol{\theta}_0, \Phi_{k-1}, dy)    \right]. 
\end{align*}
Let $h_i(x) = \int_{{\sf X}} \hat f(y) \frac{\partial}{\partial \theta_i} p(\boldsymbol{\theta}_0, x, y) P(\boldsymbol{\theta}_0, x, dy)$ and note that by the Cauchy-Schwarz inequality
\begin{align*}
|h_i(x)| &\leq \left( \int_{{\sf X}} \hat f^2(y)  P(\boldsymbol{\theta}_0, x, dy) \right)^{1/2} \left( \int_{{\sf X}} \left( \frac{\partial}{\partial \theta_i} p(\boldsymbol{\theta}_0, x, y) \right)^2 P(\boldsymbol{\theta}_0, x, dy) \right)^{1/2} \\
&= \left( P(\boldsymbol{\theta}_0) \hat f^2(x)  \right)^{1/2} \left( g_{ii}(x) \right)^{1/2} \\
&\leq c_1 \left(  P(\boldsymbol{\theta}_0) V(x)  \right)^{1/2} \left(  |g_{ii}|_V V(x) \right)^{1/2} \\
&\leq c_1 \left(  |g_{ii}|_V V(x) (  \lambda V(x) + b ) \right)^{1/2} \\
&\leq c_1 \left( |g_{ii}|_V ( \lambda + b ) \right)^{1/2} V(x),
\end{align*}
and therefore $|h_i|_V < \infty$. It follows from the same arguments used above that 
$$\lim_{n \to \infty} \frac{1}{n} \sum_{k=1}^{n} E_{\boldsymbol{\theta}_0}[\xi_k D_k^i] = \lim_{n \to \infty} \frac{1}{n} \sum_{k=1}^{n} E_{\boldsymbol{\theta}_0}[ h_i(\Phi_{k-1}) ] = \pi(\boldsymbol{\theta}_0) h_i,$$
with the limit $\pi(\boldsymbol{\theta}_0) h_i$ well-defined and finite. It only remains to show that $\pi(\boldsymbol{\theta}_0) h_i = \frac{\partial}{\partial \theta_i} \alpha (\boldsymbol{\theta}_0)$. To do this first note that
\begin{align*}
\frac{\partial}{\partial \theta_i} \alpha (\boldsymbol{\theta}_0) &= \left. \frac{\partial}{\partial \theta_i} \int_{{\sf X}}  f(x) \pi(\boldsymbol{\theta}, dx) \right|_{\boldsymbol{\theta} = \boldsymbol{\theta}_0} \\
&= \left. \frac{\partial}{\partial \theta_i} \int_{{\sf X}}  \left( \hat f(x) - P(\boldsymbol{\theta}_0) \hat f(x) + \alpha(\boldsymbol{\theta}_0) \right)\pi(\boldsymbol{\theta}, dx) \right|_{\boldsymbol{\theta} = \boldsymbol{\theta}_0} \\
&= \left. \frac{\partial}{\partial \theta_i} \int_{{\sf X}}  \left( \hat f(x) - P(\boldsymbol{\theta}_0) \hat f(x)  \right)\pi(\boldsymbol{\theta}, dx) \right|_{\boldsymbol{\theta} = \boldsymbol{\theta}_0} \\
&= \lim_{h \to 0} \int_{{\sf X}}   \left( \hat f(x) - P(\boldsymbol{\theta}_0) \hat f(x)  \right)  \frac{\pi(\boldsymbol{\theta}_0+h{\bf e}_i, dx) - \pi(\boldsymbol{\theta}_0, dx)}{h}   \\
&= \lim_{h \to 0} \int_{{\sf X}}   \left( \hat f(x) - P(\boldsymbol{\theta}_0) \hat f(x)  \right)  \frac{\pi(\boldsymbol{\theta}_0+h{\bf e}_i, dx) }{h}  \\
&=  \lim_{h \to 0} \int_{{\sf X}}   \left( \hat f(x) -P(\boldsymbol{\theta}_0+h{\bf e}_i) \hat f(x) + P(\boldsymbol{\theta}_0+h{\bf e}_i) \hat f(x) - P(\boldsymbol{\theta}_0) \hat f(x)  \right)  \frac{\pi(\boldsymbol{\theta}_0+h{\bf e}_i, dx) }{h} \\
&= \lim_{h \to 0} \int_{{\sf X}}   \left( \frac{ P(\boldsymbol{\theta}_0+h{\bf e}_i) \hat f(x) - P(\boldsymbol{\theta}_0) \hat f(x) }{h}  \right)  \pi(\boldsymbol{\theta}_0+h{\bf e}_i, dx) ,
\end{align*}
where in the fifth and seventh steps we used the identity $\pi(\boldsymbol{\theta}) P(\boldsymbol{\theta}) = \pi(\boldsymbol{\theta})$ for all $\boldsymbol{\theta} \in B_\epsilon(\boldsymbol{\theta}_0)$. Next, note that $h_i(x) = P^{(i)}(\boldsymbol{\theta}_0) \hat f(x)$, from where it follows that
\begin{align}
&\left| \pi(\boldsymbol{\theta}_0)h_i - \frac{\partial}{\partial \theta_i} \alpha (\boldsymbol{\theta}_0) \right| \notag \\
&= \left| \int_{{\sf X}} P^{(i)}(\boldsymbol{\theta}_0) \hat f(x)   \pi(\boldsymbol{\theta}_0, dx)  -  \lim_{h \to 0} \int_{{\sf X}}   \left( \frac{ P(\boldsymbol{\theta}_0+h{\bf e}_i) \hat f(x) - P(\boldsymbol{\theta}_0) \hat f(x) }{h}  \right)  \pi(\boldsymbol{\theta}_0+h{\bf e}_i, dx) \right| \notag \\
&\leq  \lim_{h \to 0} \left| \int_{{\sf X}} P^{(i)}(\boldsymbol{\theta}_0) \hat f(x)  \left( \pi(\boldsymbol{\theta}_0,dx) -  \pi(\boldsymbol{\theta}_0 + h {\bf e}_i, dx) \right) \right| \label{eq:dominatedConv} \\
&\hspace{5mm} +  \lim_{h \to 0} \left|  \int_{{\sf X}}   \left( P^{(i)}(\boldsymbol{\theta}_0) \hat f(x)  - \frac{ P(\boldsymbol{\theta}_0+h{\bf e}_i) \hat f(x) - P(\boldsymbol{\theta}_0) \hat f(x) }{h}  \right) \pi(\boldsymbol{\theta}_0+h{\bf e}_i, dx)  \right| . \label{eq:VnormConv}
\end{align}
It remains to show that the last two limits are zero. To analyze \eqref{eq:dominatedConv} recall that $|h_i|_V < \infty$, from where it follows that \eqref{eq:dominatedConv} is bounded by
\begin{align*}
\lim_{h \to 0} \left|\left| \pi(\boldsymbol{\theta}_0) - \pi(\boldsymbol{\theta}_0+h{\bf e}_i) \right|\right|_V = 0 \qquad \text{(by Lemma \ref{L.ChainProperties})}.
\end{align*}
And to show that \eqref{eq:VnormConv} is zero as well note that
\begin{align*}
&\left| P^{(i)}(\boldsymbol{\theta}_0) \hat f(x)   -\frac{ P(\boldsymbol{\theta}_0+h{\bf e}_i) \hat f(x) - P(\boldsymbol{\theta}_0) \hat f(x) }{h}  \right| \\
&\leq |\hat f|_V \left|\left| P^{(i)}(\boldsymbol{\theta}_0, x, \cdot) - \frac{P(\boldsymbol{\theta}_0 + h {\bf e}_i, x, \cdot)  - P(\boldsymbol{\theta}_0, x, \cdot ) }{h} \right|\right|_V \\
&\leq |\hat f|_V V(x) \vertiii{  P^{(i)}(\boldsymbol{\theta}_0) - \frac{P(\boldsymbol{\theta}_0 + h {\bf e}_i)  - P(\boldsymbol{\theta}_0) }{h} }_V,
\end{align*}
which combined with $\pi(\boldsymbol{\theta}_0)V < \infty$ gives that \eqref{eq:VnormConv} is bounded by
\begin{align*}
&\lim_{h \to 0} \int_{{\sf X}}  |\hat f|_V V(x) \vertiii{  P^{(i)}(\boldsymbol{\theta}_0) - \frac{P(\boldsymbol{\theta}_0 + h {\bf e}_i)  - P(\boldsymbol{\theta}_0) }{h} }_V \pi(\boldsymbol{\theta}_0 + h{\bf e}_i, dx) \\
&= |\hat f|_V   \lim_{h \to 0} \vertiii{ P^{(i)}(\boldsymbol{\theta}_0) - \frac{P(\boldsymbol{\theta}_0 + h {\bf e}_i)  - P(\boldsymbol{\theta}_0) }{h} }_V  \pi(\boldsymbol{\theta}_0+ h{\bf e}_i) V \\
&\leq |\hat f|_V   \lim_{h \to 0} \vertiii{ P^{(i)}(\boldsymbol{\theta}_0) - \frac{P(\boldsymbol{\theta}_0 + h {\bf e}_i)  - P(\boldsymbol{\theta}_0) }{h} }_V  \left( \pi(\boldsymbol{\theta}_0) V + \left|\left| \pi(\boldsymbol{\theta}_0+ h{\bf e}_i) - \pi(\boldsymbol{\theta}_0) \right|\right|_V \right) = 0.
\end{align*}
This completes the proof. 
\end{proof}

The following is the proof of the main weak convergence theorem that is used to describe the behavior of all four estimators considered in Sections \ref{S.FirstEstimator} and \ref{S.SecondEstimator}. It is essentially an application of the Functional Central Limit Theorem for multivariate martingales found in \cite{Whitt_07} (see also Theorems 1.4 and 1.2 in Chapter 7 of \cite{Ethier_Kurtz}).

\begin{proof}[Proof of Theorem \ref{T.FCLT}]
For $m \in \mathbb{N}$ let $Z_m = \sum_{k=1}^m \left( \hat f(\Phi_k) - P(\boldsymbol{\theta}_0) \hat f(\Phi_{k-1}) \right)$ and $M_m = \nabla L_m(\boldsymbol{\theta}_0)$. Next, define the process $X_n(t) = n^{-1/2} \left( Z_{\lfloor nt \rfloor}, M_{\lfloor nt \rfloor}' \right)'$ and the filtrations $\mathcal{G}_{n,t}= \mathcal{F}_{\lfloor nt \rfloor} = \sigma( \Phi_0, \dots, \Phi_{\lfloor n t \rfloor})$. Note that by Lemmas \ref{L.MG_cond} and \ref{L.Poisson} $\{ X_n(t) : t\in [0,1]\}$ is a square integrable martingale with respect to $\mathcal{G}_{n,t}$. Moreover, by \eqref{eq:alphas} we have
$$\left( n^{-1/2} \lfloor nt \rfloor (\alpha_{\lfloor nt \rfloor} - \alpha(\boldsymbol{\theta}_0)), \, n^{-1/2} \nabla L_{\lfloor nt \rfloor}(\boldsymbol{\theta}_0)' \right) = \left(   n^{-1/2} ( \hat f(\Phi_0) - \hat f(\Phi_{\lfloor n t \rfloor}) ), \, {\bf 0}' \right) + X_n(t),$$
where ${\bf 0}$ is the zero vector in $\mathbb{R}^d$. Note that
\begin{align*}
\sup_{0 \leq t \leq 1} n^{-1/2} |\hat f(\Phi_0) - \hat f(\Phi_{\lfloor n t \rfloor}) )| \leq 2 \max_{0 \leq k \leq n} \frac{|\hat f(\boldsymbol{\Phi}_k)|}{n^{1/2}}.
\end{align*}
Since $\boldsymbol{\Phi}$ is $V$-ergodic and $|\hat f^2|_V < \infty$, Theorem~17.3.3 in \cite{Meyn_Tweedie} gives
$$\max_{1 \leq k \leq n} \frac{(\hat  f(\boldsymbol{\Phi}_k))^2}{n} \to 0 \qquad \text{a.s. } P(\boldsymbol{\theta}),$$
which in turn implies that $\left(   n^{-1/2} ( \hat f(\Phi_0) - \hat f(\Phi_{\lfloor n t \rfloor}) ), \, {\bf 0}' \right) \Rightarrow {\bf 0}'$ in $D([0,1], \mathbb{R}^{d+1})$. It follows by Slutsky's lemma that it suffices to show that $X_n \Rightarrow B$ in $D([0,1], \mathbb{R}^{d+1})$. We will do so by showing that $X_n$ satisfies condition (ii) of Theorem 2.1 in \cite{Whitt_07}. 
 
Let $\xi_k^n = X_n(k/n) - X_n((k-1)/n)$ and consider the matrix $A_n = A_n(t) \in \mathbb{R}^{(d+1)\times (d+1)}$ whose $(i,j)$th component is given by
$$A_n^{ij}(t) = \sum_{k=1}^{\lfloor n t \rfloor} E_{\boldsymbol{\theta}_0} \left[ \left. (\xi_k^n)_i (\xi_k^n)_j \right| \mathcal{F}_{k-1} \right] . $$
Then $X_n^i(t) X_n^j(t) - A_n^{ij}(t)$ is a martingale adapted to $\mathcal{G}_{n,t}$ for each $0 \leq i,j \leq d$, and therefore, the $A_n^{ij} = \langle X_n^i, X_n^j \rangle$ are the predictable quadratic-covariation processes of $X_n$. Also, for $1 \leq i,j \leq d$ we have
\begin{align*}
A_n^{ij}(t) &= \frac{1}{n} \sum_{k=1}^{\lfloor nt \rfloor} E_{\boldsymbol{\theta}_0} \left[ \left. \frac{\partial}{\partial \theta_i}  p(\boldsymbol{\theta}_0, \Phi_{k-1}, \Phi_k) \, \frac{\partial}{\partial \theta_j}  p(\boldsymbol{\theta}_0, \Phi_{k-1}, \Phi_k) \right| \mathcal{F}_{k-1} \right] \\
&= \frac{1}{n} \sum_{k=1}^{\lfloor nt \rfloor} \int_{\sf X} \frac{\partial}{\partial \theta_i} p(\boldsymbol{\theta}_0, \Phi_{k-1}, y) \, \frac{\partial}{\partial \theta_j} p(\boldsymbol{\theta}_0, \Phi_{k-1}, y) P(\boldsymbol{\theta}_0,\Phi_{k-1}, dy) \\
&= \frac{1}{n} \sum_{k=1}^{\lfloor nt \rfloor} g_{ij}(\Phi_{k-1}),
\end{align*}
and for $1 \leq i \leq d$, 
\begin{align*}
A_n^{0i}(t) = A_n^{i0}(t) &= \frac{1}{n} \sum_{k=1}^{\lfloor nt \rfloor} E_{\boldsymbol{\theta}_0} \left[ \left. \frac{\partial}{\partial \theta_i}  p(\boldsymbol{\theta}_0, \Phi_{k-1}, \Phi_k)  \left( \hat f(\Phi_k) - P(\boldsymbol{\theta}_0) \hat f(\Phi_{k-1}) \right) \right| \mathcal{F}_{k-1} \right] \\
&= \frac{1}{n} \sum_{k=1}^{\lfloor nt \rfloor} \int_{\sf X} \frac{\partial}{\partial \theta_i} p(\boldsymbol{\theta}_0, \Phi_{k-1}, y) \hat f(y)  P(\boldsymbol{\theta}_0,\Phi_{k-1}, dy) \\
&= \frac{1}{n} \sum_{k=1}^{\lfloor nt \rfloor} g_{0i}(\Phi_{k-1}).
\end{align*} 
Similarly,
$$A_n^{00}(t) = \frac{1}{n}   \sum_{k=1}^{\lfloor nt \rfloor} g_{00}(\Phi_{k-1}).$$
Since $\pi(\boldsymbol{\theta}_0)|g_{ij}| \leq \pi(\boldsymbol{\theta}_0) (g_{ii} g_{jj})^{1/2} \leq (\pi(\boldsymbol{\theta}_0)g_{ii})^{1/2} (\pi(\boldsymbol{\theta}_0)g_{jj})^{1/2} < \infty$ for each $0\leq i,j \leq d$, we have by Theorem 17.0.1 in \cite{Meyn_Tweedie} that
$$\frac{\lfloor nt \rfloor}{n} \cdot \frac{1}{\lfloor nt \rfloor} \sum_{k=1}^{\lfloor nt \rfloor} g_{ij}(\Phi_{k-1}) \to t\pi(\boldsymbol{\theta}_0) g_{ij} \qquad \text{a.s. } P(\boldsymbol{\theta}).$$

Also, for each $0 \leq i,j \leq d$,
\begin{align*}
\lim_{n \to \infty} E_{\boldsymbol{\theta}_0} \left[ \sup_{t \in [0,1]} | A_n^{ij}(t) - A_n^{ij}(t-) | \right] &= \lim_{n \to \infty}  E_{\boldsymbol{\theta}_0} \left[ \max_{1 \leq k \leq n} \left| E_{\boldsymbol{\theta}_0} \left[ \left.  (\xi_k^n)_i (\xi_k^n)_j \right| \mathcal{F}_{k-1} \right] \right| \right] \\
&= \lim_{n \to \infty} \frac{1}{n} E_{\boldsymbol{\theta}_0} \left[ \max_{1 \leq k \leq n} \left|   g_{ij}(\Phi_{k-1})  \right| \right] \\
&\leq |g_{ij}|_V \lim_{n \to \infty} \frac{1}{n} E_{\boldsymbol{\theta}_0} \left[ \max_{1 \leq k \leq n}    V(\Phi_{k-1})   \right] \\
&\leq |g_{ij}|_V \lim_{n \to \infty} \frac{1}{n} E_{\boldsymbol{\theta}_0} \left[ \max_{1 \leq k \leq n} \left( \sqrt{n} +   V(\Phi_{k-1}) 1(V(\Phi_{k-1}) > \sqrt{n})   \right) \right] \\
&\leq  |g_{ij}|_V \lim_{n \to \infty} \frac{1}{n} \sum_{k=1}^n E_{\boldsymbol{\theta}_0} \left[    V(\Phi_{k-1}) 1(V(\Phi_{k-1}) > \sqrt{n})  \right] .
\end{align*}
To see that the last expression converges to zero let $h_m(x) = V(x) 1(V(x) > m)$, and note that $E_{\boldsymbol{\theta}_0} [h_m(\Phi_k)] \to \pi(\boldsymbol{\theta}_0) h_m < \infty$ as $k \to \infty$, and monotone convergence gives $\pi(\boldsymbol{\theta}_0) h_m \to 0$ as $m \to \infty$, therefore we can choose $N \in \mathbb{N}$ large enough so that $\pi(\boldsymbol{\theta}_0) h_N < \delta$. It follows that
$$\lim_{n \to \infty} \frac{1}{n} \sum_{k=1}^n E_{\boldsymbol{\theta}_0} \left[ h_{\sqrt{n}}(\Phi_{k-1}) \right]  \leq \lim_{n \to \infty} \frac{1}{n} \sum_{k=1}^n E_{\boldsymbol{\theta}_0} \left[ h_N(\Phi_{k-1}) \right] = \pi(\boldsymbol{\theta}_0) h_N < \delta,$$
and since $\delta > 0$ was arbitrary, the limit is zero. 

For a vector ${\bf x} = (x_0, x_1, \dots, x_d)' \in \mathbb{R}^{d+1}$ let $|{\bf x}| = \left( \sum_{i=0}^{d} x_i^2 \right)^{1/2}$. Then, by similar arguments as those used above, 
\begin{align*}
\lim_{n \to \infty} E_{\boldsymbol{\theta}_0} \left[ \sup_{t \in [0,1]} | X_n(t) - X_n(t-) |^2 \right] &=  \lim_{n \to \infty} E_{\boldsymbol{\theta}_0} \left[ \max_{1 \leq k \leq n} \left| \xi_k^n  \right|^2 \right] \\
&= \sum_{i=0}^d \lim_{n \to \infty} E_{\boldsymbol{\theta}_0} \left[ \max_{1 \leq k \leq n}  (\xi_k^n)^2_i \right]  \\
&\leq \sum_{i=0}^d \lim_{n \to \infty} \sum_{k=1}^n  E_{\boldsymbol{\theta}_0} \left[ (\xi_k^n)^2_i 1( (\xi_k^n)^2_i > n^{-1/2})   \right] \\
&= \sum_{i=0}^d \lim_{n \to \infty} \frac{1}{n} \sum_{k=1}^n E_{\boldsymbol{\theta}_0} \left[ \hat h_{i,\sqrt{n}}(\Phi_{k-1}) \right],
\end{align*}
where
\begin{align*}
\hat h_{i,m}(x) &= \int_{\sf X} \left( \frac{\partial}{\partial \theta_i} p(\boldsymbol{\theta}_0, y, x) \right)^2 1\left(  \left( \frac{\partial}{\partial \theta_i} p(\boldsymbol{\theta}_0, y, x) \right)^2 > m \right) P(\boldsymbol{\theta}_0, x,dy), \qquad 1 \leq i \leq d, \\
\hat h_{0,m}(x) &= \int_{\sf X} \left( \hat f(y) - P(\boldsymbol{\theta}_0)\hat f(x) \right)^2 1\left(  \left( \hat f(y) - P(\boldsymbol{\theta}_0)\hat f(x) \right)^2 > m \right) P(\boldsymbol{\theta}_0, x,dy).
\end{align*}
Since we have that $\hat h_{i,m} \leq |g_{ii}|_V V$ for all $0 \leq i \leq d$, then $E_{\boldsymbol{\theta}_0} [\hat h_{i,m} (\Phi_{k-1})] \to \pi(\boldsymbol{\theta}_0) \hat h_{i,m} < \infty$ for each fixed $m \in \mathbb{N}$ and the same arguments used before give 
$$\lim_{n \to \infty} \frac{1}{n} \sum_{k=1}^n E_{\boldsymbol{\theta}_0} \left[ \hat h_{i,\sqrt{n}}(\Phi_{k-1}) \right] = 0.$$
It now follows from Theorem 2.1 in \cite{Whitt_07} that $X_n \Rightarrow B$ in $D([0,1], \mathbb{R}^{d+1})$. 
\end{proof}

\bigskip

The next proof corresponds to Proposition \ref{P.CLTbadestimator2}, which shows the lack of convergence of the discrete integral 
$$\frac{1}{n} \sum_{k=1}^{n-1} \sum_{l=k}^{n-1} f(\boldsymbol{\Phi}_l) D_k.$$

\begin{proof}[Proof of Proposition \ref{P.CLTbadestimator2}]
First note that the $V$-ergodicity of $\boldsymbol{\Phi}$, the observation that $\pi(\boldsymbol{\theta}_0)|f| < \infty$, and Theorem~17.0.1 in \cite{Meyn_Tweedie}, gives for any $t \in [0,1]$, 
$$W_n(t) = \frac{1}{n} \sum_{l=0}^{n-1} f(\boldsymbol{\Phi}_l) - \frac{\lfloor nt \rfloor + 1}{n} \cdot \frac{1}{\lfloor nt \rfloor + 1} \sum_{l = 0}^{\lfloor nt \rfloor } f(\boldsymbol{\Phi}_l) \to \alpha(\boldsymbol{\theta}_0)(1-t) = W(t) \qquad \text{a.s. } P(\boldsymbol{\theta}_0),$$
as $n \to \infty$. Moreover,
\begin{align*}
n (W_n(t) - W(t)) &= \sum_{l= \lfloor nt \rfloor + 1}^{n-1} (f(\boldsymbol{\Phi}_l) - \alpha(\boldsymbol{\theta}_0)) + (nt-\lfloor nt \rfloor - 1) \alpha(\boldsymbol{\theta}_0) \\
&= S_n(1) - S_n(t) + (nt-\lfloor nt \rfloor - 1) \alpha(\boldsymbol{\theta}_0),
\end{align*}
where $S_n(t) = \sum_{j=0}^{\lfloor nt \rfloor -1} (f(\boldsymbol{\Phi}_j) - \alpha(\boldsymbol{\theta}_0))$ and $n^{-1/2} S_n \Rightarrow B_0$ in $D([0,1], \mathbb{R})$ by Theorem~\ref{T.FCLT}, with $B_0$ a mean zero Brownian motion.  It follows that $W_n(t) - W(t) \Rightarrow 0$ in $D([0,1], \mathbb{R})$.  Also, by Theorem~\ref{T.FCLT} again we have that  $n^{-1/2} \nabla L_{\lfloor n \cdot \rfloor}(\boldsymbol{\theta}_0) \Rightarrow B$ in $D([0,1], \mathbb{R}^d)$, where $B$ is a zero-mean $d-$dimensional Brownian motion with covariance matrix $\Sigma$. 

It follows that since $W$ is a non-random element of $D([0,1], \mathbb{R})$, 
\begin{equation} \label{eq:Joint1}
\left( W_n, \, n^{-1/2} \nabla L_{\lfloor n \cdot \rfloor}(\boldsymbol{\theta}_0)' \right) \Rightarrow (W, B') \qquad n \to \infty
\end{equation}
in $D([0,1], \mathbb{R}^{d+1})$. 

Next, define the processes $X_n(t) = W_n(t) I$, $X(t) = W(t)I$, $Z_n(t) = n^{-1/2} \nabla L_{\lfloor nt \rfloor}(\boldsymbol{\theta}_0)$, and $Z(t) = B(t)$, where $I$ is the identity matrix of $\mathbb{R}^{d\times d}$. Define $\mathcal{G}_{n,t}= \mathcal{F}_{\lfloor nt \rfloor}$. Clearly, $X_n$ and $Z_n$ are $\{\mathcal{G}_{n,t}\}$-adapted and $Z_n$ is a  $\{ \mathcal{G}_{n,t}\}$-martingale. Also, for $t_i = i/n$, 
$$n^{-1/2} Y_n = \frac{1}{n^{1/2}} \frac{1}{n} \sum_{k=1}^{n-1} \sum_{l=k}^{n-1} f(\Phi_l) D_k =  \sum_{k=1}^{n-1} X_n(t_{k-1}) (Z_n(t_k) - Z_n(t_{k-1})) = \int_0^{1-\frac{1}{n}} X_n(s-) \, d Z_n(s).$$
Consider now the process
$$[Z_n]_t = \frac{1}{n} \sum_{k=1}^{\lfloor nt \rfloor} D_kD_k'$$
and note that the $(i,j)^{th}$ element of $\left| E_{\boldsymbol{\theta}_0} [[Z_n]_t]\right|$ ($1 \leq i, j \leq d$) is 
\begin{align*}
\left|\frac{1}{n} \sum_{k=1}^{\lfloor nt \rfloor} E_{\boldsymbol{\theta}_0} [D_k^i D_k^j] \right| &= \left|\frac{1}{n} \sum_{k=1}^{\lfloor nt \rfloor} E_{\boldsymbol{\theta}_0} \left[ E_{\boldsymbol{\theta}_0}[D_k^i D_k^j| \mathcal{F}_{k-1}]  \right] \right| = \left| E_{\theta_0} \left[ \frac{1}{n} \sum_{k=1}^{\lfloor nt \rfloor} g_{ij}(\Phi_{k-1})\right] \right| \\
&\leq t \sup_n E_{\boldsymbol{\theta}_0} \left[ \left| \frac{1}{n} \sum_{k=1}^{n} g_{ij}(\Phi_{k-1}) \right|\right]  .
\end{align*}
For each $\alpha > 0$ let $\tau_n^\alpha = 2\alpha$ and note that $P_{\boldsymbol{\theta}_0}(\tau_n^\alpha \leq \alpha) = 0 \leq 1/\alpha$ and
$$\sup_n E_{\boldsymbol{\theta}_0} [ [Z_n]_{t \wedge \tau_n^\alpha}] \leq \sup_n E_{\boldsymbol{\theta}_0}[[Z_n]_{2\alpha}] \leq 2\alpha \max_{1 \leq i,j \leq d} \sup_n E_{\boldsymbol{\theta}_0} \left[ \left| \frac{1}{n} \sum_{k=1}^{n} g_{ij}(\Phi_{k-1}) \right|\right] <  \infty.$$
Finally, by \eqref{eq:Joint1} we have $(X_n, Z_n) \Rightarrow (X, Z)$ in $D([0,1], \mathbb{R}^{d\times d} \times \mathbb{R}^d)$. 
Therefore, the conditions of Theorem~2.7 of \cite{Kurtz_Protter_91} are satisfied and we have
$$\left(X_n, Z_n, \int_0^{1-\frac{1}{n}} X_n dZ_n\right) \Rightarrow \left(X, Z, \int_0^1 X dZ\right) \qquad n \to \infty$$
in $D([0,1], \mathbb{R}^{d\times d} \times \mathbb{R}^d \times \mathbb{R}^d)$.   
\end{proof}

The last proof in the paper corresponds to the calculation of the variance of $\int_0^1 (B_0(1) - B_0(s)) I d\hat B(s)$. 

\begin{proof}[Proof of Lemma \ref{L.IntegralCov}]
Note that we can write the limit as $B_0(1) \hat B(1) - \int_0^1 B_0(s) I d\hat B(s)$. Let $U = B_0(1) \hat B(1)$ and $V = \int_0^1 B_0(s) I d\hat B(s)$, then, since $E_{\boldsymbol{\theta}_0}[ V] = 0$, the covariance matrix of the limiting distribution is given by
\begin{align*}
\cov_{\boldsymbol{\theta}_0} \left( \mathcal{I} \right) &= E_{\boldsymbol{\theta}_0} [ (U - V - E_{\boldsymbol{\theta}_0}[U]) (U - V - E_{\boldsymbol{\theta}_0}[U])' ] \\
&= E_{\boldsymbol{\theta}_0} [ U U']   - E_{\boldsymbol{\theta}_0} [  U V'] - E_{\boldsymbol{\theta}_0} [ U  ] E_{\boldsymbol{\theta}_0}[U]' - E_{\boldsymbol{\theta}_0} [ V U'] + E_{\boldsymbol{\theta}_0} [ V V']  \\
&= \cov_{\boldsymbol{\theta}_0} ( U ) - E_{\boldsymbol{\theta}_0} [  U V'] - \left( E_{\boldsymbol{\theta}_0} [ U V'] \right)' + E_{\boldsymbol{\theta}_0} [ V V']. 
\end{align*}
Note that $U \stackrel{\mathcal{D}}{=} Z_0 \hat Z$, i.e., the limiting distribution of $Y(C_n^*)$, so the $(i,j)$th component of $\cov_{\boldsymbol{\theta}_0} ( U )$ is $\sigma_{00} \sigma_{ij} + \sigma_{0i} \sigma_{0j}$. To compute the remaining expectations let $W(t) = (W_0(t), W_1(t), \dots, W_d(t))'$ be a standard $(d+1)$-dimensional Brownian motion and write $\Sigma = C C'$, where $C = (c_{ij}) \in \mathbb{R}^{(d+1)\times(d+1)}$. Then, we can rewrite
$$V = \int_0^1 B_0(s) C dW(s), \qquad B(t) = \int_0^t C dW(s),$$
and
\begin{align*}
\left( E_{\boldsymbol{\theta}_0} [ V V'] \right)_{ij} &= E_{\boldsymbol{\theta}_0} \left[ V_i V_j  \right] = E_{\boldsymbol{\theta}_0} \left[  \int_0^1 B_0(s) \sum_{k=0}^d c_{ik} d W_k(s)  \int_0^1 B_0(s) \sum_{l=0}^d  c_{jl} dW_l(s) \right] \\
&=  \sum_{k=0}^d c_{ik}  \sum_{l=0}^d  c_{jl} \, E_{\boldsymbol{\theta}_0} \left[ \int_0^1 B_0(s)  d W_k(s)     \int_0^1 B_0(s)dW_l(s) \right] \\
&= \sum_{k=0}^d c_{ik}    c_{jk} \, E_{\boldsymbol{\theta}_0} \left[ \left( \int_0^1 B_0(s)  d W_k(s) \right)^2 \right] \\
&= \sigma_{ij} \,  \int_0^1 E_{\boldsymbol{\theta}_0} \left[ \left(  B_0(s) \right)^2  \right] ds \\
&= \sigma_{ij} \,  \int_0^1 \sigma_{00} s \, ds = \frac{\sigma_{00} \sigma_{ij}}{2}.
\end{align*}
To compute $E_{\boldsymbol{\theta}_0}[U V']$ first note that we can write it as
\begin{align*}
\left( E_{\boldsymbol{\theta}_0} [ U V'] \right)_{ij} &= E_{\boldsymbol{\theta}_0} [ U_i V_j ]  = E_{\boldsymbol{\theta}_0} \left[ B_0(1) B_i(1) \int_0^1 B_0(s) \sum_{k=0}^d c_{jk} d W_k(s) \right] \\
&= E_{\boldsymbol{\theta}_0} \left[ \sum_{m=0}^d c_{0m} W_m(1) \sum_{l=0}^d c_{il} W_l(1)  \int_0^1 \sum_{n=0}^d c_{0n} W_n(s) \sum_{k=0}^d c_{jk} d W_k(s) \right] \\
&=  \sum_{n=0}^d c_{0n} \sum_{k=0}^d c_{jk} \sum_{m=0}^d c_{0m} \sum_{l=0}^d c_{il} \, E_{\boldsymbol{\theta}_0} \left[  W_m(1)  W_l(1)  \int_0^1  W_n(s)  d W_k(s) \right] .
\end{align*}
Now, for each of the remaining expectations use the product rule $W_m(1) W_l(1) = \int_0^1 W_m(s) dW_l(s) + \int_0^1 W_l(s) dW_m(s) + 1(m=l)$ to obtain
\begin{align*}
E_{\boldsymbol{\theta}_0} \left[  W_m(1)  W_l(1)  \int_0^1  W_n(s)  d W_k(s) \right] &= E_{\boldsymbol{\theta}_0} \left[  \int_0^1 W_m(s) dW_l(s)  \int_0^1  W_n(s)  d W_k(s) \right] \\
&\hspace{5mm} + E_{\boldsymbol{\theta}_0} \left[  \int_0^1 W_l(s) dW_m(s)  \int_0^1  W_n(s)  d W_k(s) \right] \\
&= \int_0^1 E_{\boldsymbol{\theta}_0} \left[ W_m(s) W_n(s)  \right] ds \, 1(l = k) \\
&\hspace{5mm} + \int_0^1 E_{\boldsymbol{\theta}_0} \left[ W_l(s) W_n(s) \right] ds \, 1(m = k) \\
&= \int_0^1 s 1(m=n) \, ds \, 1(l = k) + \int_0^1 s 1(l = n) \, ds \, 1(m = k) \\
&= \frac{1}{2} 1( m = n) 1(l = k) + \frac{1}{2} 1(l = n) 1(m = k).
\end{align*}
Substituting in the expression for $E_{\boldsymbol{\theta}_0}[U V']$ we obtain
\begin{align*}
\left( E_{\boldsymbol{\theta}_0} [ U V'] \right)_{ij} &= \frac{1}{2} \sum_{n=0}^d c_{0n} \sum_{k=0}^d c_{jk} c_{0n}  c_{ik}  + \frac{1}{2} \sum_{n=0}^d c_{0n} \sum_{k=0}^d c_{jk}  c_{0k}  c_{in}  \\
&= \frac{1}{2} \sigma_{00} \sigma_{ij}  + \frac{1}{2} \sigma_{0i} \sigma_{0j} = \left( E_{\boldsymbol{\theta}_0} [ U V'] \right)_{ji}  .
\end{align*}

Therefore, the $(i,j)$th component, $1 \leq i,j \leq d$, of the limiting distribution's covariance matrix is given by 
\begin{align*}
\cov_{\boldsymbol{\theta}_0} (\mathcal{I})_{ij} &= \sigma_{00} \sigma_{ij} + \sigma_{0i} \sigma_{0j}  - (\sigma_{00} \sigma_{ij} + \sigma_{0i} \sigma_{0j}) + \frac{\sigma_{00} \sigma_{ij}}{2} = \frac{\sigma_{00} \sigma_{ij}}{2}. 
\end{align*}
\end{proof}

\bibliographystyle{apalike}
\bibliography{GradientBib}


\end{document}